\newtheorem{theorem}{Theorem} 
\newtheorem{proposition}{Proposition}
\newtheorem*{claim}{Claim}
\newtheorem{lemma}{Lemma} 
\theoremstyle{definition}
\newtheorem*{example}{Example}
\newtheorem{remark}{Remark}
\def\D{\mathcal{D}}
\def\R{\mathbb{R}}
\def\Rsf{\mathsf{R}}
\def\Z{\mathbb{Z}}
\def\T{\mathsf{T}}
\def\C{\mathsf{C}}
\def\aa{\boldsymbol{a}}
\def\bb{\boldsymbol{b}}
\def\ee{\boldsymbol{e}}
\def\vv{\boldsymbol{v}}
\def\xx{\boldsymbol{x}}
\def\yy{\boldsymbol{y}}
\def\zero{\boldsymbol{0}}
\def\supp{\operatorname{supp}}
\def\ds{\displaystyle}
\def\sd{\operatorname{sd}}
\def\sd{\operatorname{sd}}
\def\det{\operatorname{det}}
\def\proj{\operatorname{proj}}
\def\supp{\operatorname{supp}}
\begin{document} 

\title{Envy-free cake division without assuming the players prefer nonempty pieces}

\author{Fr\'ed\'eric Meunier}
\address{F. Meunier, Universit\'e Paris Est, CERMICS, 77455 Marne-la-Vall\'ee CEDEX, France}
\email{frederic.meunier@enpc.fr}

\author{Shira Zerbib}
\address{S. Zerbib, Department of Mathematics, University of Michigan, Ann Arbor, MI, USA}
\email{zerbib@umich.edu}

\begin{abstract}
Consider $n$ players having preferences over the connected pieces of a cake, identified with the interval $[0,1]$. A classical theorem, found independently by Stromquist and by Woodall in 1980, ensures that, under mild conditions, it is possible to divide the cake into $n$ connected pieces and assign these pieces to the players in an envy-free manner, i.e, such that no player strictly prefers a piece that has not been assigned to her. One of these conditions, considered as crucial, is that no player is happy with an empty piece. We prove that, even if this condition is not satisfied, it is still possible to get such a division when $n$ is a prime number or is equal to $4$. When $n$ is at most $3$, this has been previously proved by Erel Segal-Halevi, who conjectured that the result holds for any $n$. The main step in our proof is a new combinatorial lemma in topology, close to a conjecture by Segal-Halevi and which is reminiscent of the celebrated Sperner lemma: instead of restricting the labels that can appear on each face of the simplex, the lemma considers labelings that enjoy a certain symmetry on the boundary.
\end{abstract}

\keywords{Cake-cutting, chain map, envy-free division, higher-dimensional Dunce hat, Sperner lemma.}

\maketitle 

\section{Introduction}

An instance of the envy-free cake-cutting problem consists in the following. We are given $n$ players, numbered from $1$ to $n$, and a cake to be divided among them. Since the cuts are achieved with parallel knives, the cake is identified with the segment [0,1] so that knife cuts are just points of this segment.  A {\em division} of the cake is a partition of $[0,1]$ into finitely many intervals of positive length, which we call {\em nonempty pieces} in this context.
For each player $i$, there is a {\em preference function} $p_i$ which, given a division $\D$ of the cake, returns a subset of $\D\cup\{\varnothing\}$. A nonempty piece $I$ being in $p_i(\D)$ means that player $i$ is happy to get $I$; the empty set $\varnothing$ being in $p_i(\D)$ means that player $i$ is happy to get an {\em empty piece}, i.e., to get nothing (e.g., each nonempty piece is partially burnt, a situation the player prefers to avoid).

An {\em envy-free division} of the cake is a division $\D$ such that there exists a map $\pi\colon[n]\rightarrow\D\cup\{\varnothing\}$ (matching players and pieces) that satisfies the following three conditions: 
\begin{enumerate}[label=(\roman*)]
\item\label{i} $\pi(i)\in p_i(\D)$ for all $i\in[n]$.
\item\label{ii} $\D\subseteq\pi([n])$.
\item\label{iii} $\pi(i)=\pi(i')\Longrightarrow (i=i' \mbox{ or } \pi(i)=\varnothing)$.
\end{enumerate}
Condition~\ref{i} means that each player gets a piece she is happy with, condition~\ref{ii} means that each piece is assigned to a player, and condition~\ref{iii} means that the same nonempty piece cannot be assigned to two distinct players. We present now two assumptions on the preference functions. When $n$ is a prime number or is equal to $4$, they will be enough to ensure the existence of an envy-free division. 

Consider a converging sequence $(\D^k)$ of divisions with a fixed number of nonempty pieces. The preference function $p_i$ satisfies the {\em closed preferences} assumption if the following holds: when there is a converging sequence of pieces $(P^k)$ (empty or not) with $P^k\in  p_i(\D^k)$ for all $k$, then $P^{\infty}\in p_i(\D^{\infty})$, where $\D^{\infty}$ and $P^{\infty}$ are respectively limits of $(\D^k)$ and $(P^k)$.  Here, we have to explain which metrics are used for the convergence, but to ease the readability, we postpone this discussion to the end of the introduction. We already emphasize that the chosen metric makes that whether the endpoints belong or not to the intervals does not matter.

The second assumption, which we call the {\em full division} assumption, ensures that when $|\D|=n$, then $\varnothing\notin p_i(\D)$ for every player $i$. In other words, under this assumption, when the cake has been divided into $n$ intervals of positive length, then no player is happy with an empty piece.

The following theorem is our main result. Without the condition on $n$, it has been conjectured by Erel Segal-Halevi~\cite{segal2017fairly}, who proved it for at most $n=3$ players.

\begin{theorem}\label{main}
Consider an instance of the cake-cutting problem with the number $n$ of players being either a prime number or $4$. If the preference function of every player satisfies the closed preferences assumption and the full division assumption, then there exists an envy-free division.
\end{theorem}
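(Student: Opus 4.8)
The plan is to follow the standard topological approach to Stromquist--Woodall type results, but to replace Sperner's lemma with the new combinatorial lemma advertised in the abstract. First I would encode divisions of the cake into $n$ pieces by the standard $(n-1)$-simplex $\Delta^{n-1} = \{\xx \in \R^n_{\geq 0} : \sum_i x_i = 1\}$, where the coordinate $x_j$ is the length of the $j$-th piece (reading left to right), so that a face of the simplex corresponds to a degenerate division in which some pieces have collapsed to empty. The key subtlety compared to the classical setting is that when a piece collapses, the classical proof uses the full division assumption only to avoid players being assigned empty pieces \emph{in the interior}; here we must handle boundary behavior more carefully, and this is exactly where the symmetry condition of the combinatorial lemma enters. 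I would triangulate $\Delta^{n-1}$ finely (iterated barycentric subdivisions), and to each vertex $\vv$ of the triangulation assign a label in $[n]$: the index $i$ of a player together with (implicitly) a choice of piece that player $i$ is happy with in the division coded by $\vv$, using the closed preferences assumption to pass to the limit. The standard argument gives that on the boundary the labeling enjoys the relevant symmetry (the division only depends on which pieces are nonempty and their relative proportions, and a collapsed piece can be ``shifted''), so the hypotheses of the combinatorial lemma are met.

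Next I would invoke the combinatorial lemma (the ``Sperner-like with boundary symmetry'' statement proved earlier in the paper for $n$ prime or $n=4$) to extract a distinguished simplex of the triangulation — a ``fully labeled'' or ``balanced'' cell — whose vertices carry, among them, all $n$ player-labels with the appropriate multiplicities, including the assignments of empty pieces. Then I would take a sequence of finer and finer triangulations, obtain for each a distinguished simplex, let its diameter tend to $0$, and pass to a convergent subsequence so that all vertices of the distinguished simplices converge to a single point $\xx^\infty \in \Delta^{n-1}$. This point codes a division $\D^\infty$ (possibly with fewer than $n$ nonempty pieces, if $\xx^\infty$ lies on the boundary). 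Using the closed preferences assumption, each player $i$ is happy, in $\D^\infty$, with the piece that was the limit of the pieces witnessing her label along the sequence; and the combinatorial structure of the distinguished simplices guarantees that these limit pieces realize a map $\pi$ satisfying conditions \ref{i}, \ref{ii}, \ref{iii}. In particular, if two players would be forced onto the same nonempty piece, the ``balanced'' structure together with the full division assumption (applied only when $|\D^\infty| = n$) rules it out, while players assigned to pieces that have shrunk away are consistently assigned $\varnothing$.

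The main obstacle, and the part requiring the most care, is matching the conclusion of the combinatorial lemma to the requirements of an envy-free division when $\xx^\infty$ is on the boundary of the simplex, i.e., when the limiting division has strictly fewer than $n$ nonempty pieces. In the classical Stromquist--Woodall setting the full division assumption forbids exactly this by forcing the Sperner simplex away from the boundary; here we deliberately allow it, so I must check that the ``symmetry + Sperner'' conclusion still delivers a valid assignment: the players whose witnessed pieces vanished must be precisely those assigned $\varnothing$, and this must be consistent with condition \ref{iii} (no nonempty piece used twice) and condition \ref{ii} (every surviving nonempty piece is used). This amounts to a careful bookkeeping argument translating the multiplicities/labels in the distinguished cell of the combinatorial lemma into the matching $\pi$, and verifying that the limit operation does not destroy injectivity on nonempty pieces. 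A secondary technical point is to make precise, with the postponed metrics, that convergence of divisions and of pieces behaves well with respect to collapsing pieces and to the closed preferences assumption, so that all the limiting steps above are legitimate.
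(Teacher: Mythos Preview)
Your outline matches the paper's proof: encode divisions by $\Delta^{n-1}$, take a nice (boundary-symmetric) triangulation equipped with an owner-labeling invariant under the maps $r^j$, label each vertex by the owner's preferred piece, invoke the Sperner-type result (Theorem~\ref{thm:sperner-symmetry-prime} for $n$ prime, Proposition~\ref{prop:sperner-symmetry-4} for $n=4$), and pass to the limit via compactness and the closed preferences assumption.

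Two places where your sketch is imprecise and the paper's details matter. First, the label at a vertex $v$ is a \emph{subset} of $[n]$, not a single index: when the owner prefers the empty piece at a boundary point, the paper assigns the whole set $J_v$ of collapsed-coordinate indices, and this set-valued choice is exactly what makes the labeling ``nice'' (equivariant under $\rho^j$ on the $\widehat 1$-facet); selecting a single index from $J_v$ would in general break the required symmetry. The combinatorial lemma then produces a simplex admitting a \emph{system of distinct representatives} from the label sets, which is what yields injectivity on nonempty pieces after the limit. Second, the full division assumption is not used at the limit to rule out collisions; it is used earlier, to guarantee that the label set is nonempty at interior vertices (where $J_v=\varnothing$, so an owner preferring $\varnothing$ would otherwise leave the vertex unlabeled) and, for $n=4$, to ensure $\Lambda(v)\neq\{1,2,3,4\}$ so that Proposition~\ref{prop:sperner-symmetry-4} applies.
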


We do not know whether this theorem is still valid if $n$ is neither $4$, nor a prime number.

The classical envy-free cake division theorem, attributed in the recent literature to Walter Stromquist~\cite{stromquist1980cut}, but found independently by Douglas Woodall~\cite{woodall1980dividing}, ensures the same conclusion with the additional assumption that $\varnothing\notin p_i(\D)$ for every player $i$ in any case, even when $|\D|<n$, i.e., a player always strictly prefers a nonempty piece to an empty piece. This additional assumption is usually called the ``hungry players'' assumption and it has always been considered as crucial for the conclusion to hold.  The divisions into at most $n$ pieces can be identified to points in the standard $(n-1)$-dimensional simplex $\Delta^{n-1}$ (these points being the cut positions). The approach by Francis Su~\cite{su1999rental} consists in constructing a triangulation of $\Delta^{n-1}$ and in labeling it. The ``hungry players'' assumption implies that the labeling is a ``Sperner labeling'' and that Sperner's lemma can be applied. 
It was thus very surprising to discover that this assumption is unnecessary when $n=3$ at most, and that it might even be unnecessary for larger $n$.

\begin{example}
For a player $i$, a standard preference function is obtained with $$p_i(\D)=\left\{I\in\D\colon\mu_i(I)=\max_{I'\in\D}\mu_i(I')\right\},$$
where $\mu_i$ is any absolutely continuous measure on $[0,1]$. (This is actually the original type of preference functions considered by Stromquist and by Woodall.) Call it an {\em attraction preference function}. In other words, player $i$ has her own way to weigh the pieces of the cake (encoded by $\mu_i$) and facing a division of the cake, she is happy to get any of the heaviest pieces.

Another simple preference function satisfying the two assumptions required by Theorem~\ref{main}, yet making player $i$ sometimes happy with the empty piece, is obtained as follows:
$$
p_i(\D)=\left\{\begin{array}{l@{\hspace{1cm}}l} \ds{\left\{I\in\D\colon\mu_i(I)=\min_{I'\in\D}\mu_i(I')\right\}} & \mbox{if $|\D|=n$} \bigskip\\ \{\varnothing\} & \mbox{otherwise.}\end{array}\right.
$$
In other words, if the cake is divided into $n$ pieces, she is happy with any of the lightest pieces, and if the cake is divided into less than $n$ pieces, she always strictly prefers to get nothing. It models a situation where, for instance, she does not like at all the cake but she will anyway take a piece when there are $n$ nonempty pieces, in order not to offend the cook. Call such a $p_i$ a {\em rejection preference function}. 

In the two cases where all the players possess attraction preference functions or all the players possess rejection preference functions, an envy-free division exists, without any condition on $n$: in the rejection case, this can be seen by a simple adaptation of the rental harmony's proof by Su~\cite{su1999rental}, and in the attraction case, this is the Stromquist-Woodall theorem.

Theorem~\ref{main} shows that when $n$ is a prime number or $4$, an envy-free division exists, even if preference functions of both types are simultaneously present.
\end{example}

Without the full division assumption, an envy-free division (in the sense of conditions~\ref{i},~\ref{ii}, and~\ref{iii} above)  is not necessarily achievable. Imagine for instance the preference functions $p_i$ are such that, no matter which division $\D$ is chosen, we have $p_i(\D)=\{\varnothing\}$ for all $i$: every player strictly prefers to get nothing instead of a piece of positive length (e.g., the cake has been poisoned). Nevertheless, a division satisfying conditions~\ref{i} and~\ref{iii} always exists in this case, as shown by an easy adaptation of Su's proof, which we leave to the reader.

Our main step toward the proof of Theorem~\ref{main} is given by Sperner-type results, where the usual boundary condition of the Sperner lemma is replaced by a symmetry condition. This symmetry comes from the fact that, in the representation of divisions by points in $\Delta^{n-1}$, divisions into at most $n-1$ pieces admit several representatives. These Sperner-type results, close to a conjecture by Segal-Halevi \cite{segal2017fairly}, are Theorem~\ref{thm:sperner-symmetry-prime} and Proposition~\ref{prop:sperner-symmetry-4}, stated and proved in Section~\ref{sec:sperner}. 

Our paper provides another evidence to the importance of combinatorial topology in studying social-choice problems. Recent literature abounds in more examples, especially in the area of fair division. These include a ``secretive-player'' version of the classical envy-free cake division theorem \cite{frick2017achieving} (see also Remark~\ref{secretive} in Section~\ref{proofs}), discrete versions of this  theorem~\cite{discrete-connected,mirzakhani2015sperner}, and consensus-halving~\cite{SiSu03}. In particular, our paper 
contributes to the current vibrant research activity focused on variations and generalizations of Sperner's lemma. Examples, with many references, are discussed in a survey by De Loera et al.~\cite{DGMM}.

\subsection*{Algorithmic aspects}


Our proof is constructive in a purely logical sense but does not directly  admit an algorithm for computing a desired  division. Usually, finding such an algorithm in envy-free division problems is a byproduct of applying a Sperner-like theorem,  which often times provides a path-following method for approximating the required division (see e.g., Su~\cite[Section 5]{su1999rental} and Frick et al.~\cite[Section 5]{frick2017achieving}). Thus, it would be interesting to find an algorithmic version of our proof, especially using a path-following method. Such an algorithmic proof would not only provide a practical way to compute a desired envy-free division in our setting, but would also make the associated computational problem -- ``given polytime preference functions, find an envy-free division'' -- amenable for complexity study.

The relevant complexity class here is the {\em PPAD class}, which is, very roughly speaking, the class of problems that can be solved by a path-following method. The PPAD class admits {\em PPAD-complete} problems, i.e., PPAD problems that are at least as hard as any other PPAD problem. The computational problem associated with the classical envy-free cake division theorem is PPAD-complete, as was shown by Deng et al.~\cite{deng2012algorithmic} (this reference also provides an accessible introduction to the PPAD complexity class). The PPAD-completeness of the classical envy-free cake division problem implies that our problem is PPAD-hard (since it is more general), but not PPAD-complete since its belonging to the PPAD class is not established.  

Almost all the computational problems associated to Sperner's lemma are PPAD-complete; see e.g.,~\cite{chen2009complexity} and~\cite{kiraly2013ppad}, and the references therein. However, finding a fully-labeled simplex in the computational problem associated to our Theorem~\ref{thm:sperner-symmetry-prime} (which is our main Sperner-type result) is {\em not} more general than finding a fully-labeled simplex in the usual Sperner's lemma setting. Thus we cannot directly conclude that the computational problem associated to our Sperner-type result is PPAD-hard.

\subsection*{Convergence of intervals and divisions}
The metric we consider on the measurable subsets of $[0,1]$ is the {\em symmetric difference metric} $\delta$, which is defined for two measurable subsets $A,B$ of $[0,1]$ by $\delta(A,B)=\mu(A\triangle B)$, where $\mu$ is the Lebesgue measure. Note that with the symmetric difference metric, $\varnothing$ is limit of a sequence of intervals whose length goes to $0$. The distance considered between two divisions $\D$ and $\D'$ is then simply the Hausdorff distance induced by $\delta$ between $\D\cup\{\varnothing\}$ and $\D'\cup\{\varnothing\}$.

As already mentioned, divisions are usually identified with points in the standard simplex $\Delta^{n-1}$ and not with partitions into intervals. It is mainly to ease the definition of convergence and to avoid to deal with several representative of the limit that we chose to work with partitions into intervals. Nevertheless, our theorem does imply the Stromquist-Woodall theorem when $n$ is a prime number or $4$, and Segal-Halevi's result for $n=2$. Indeed, if the closed preferences assumption is satisfied for the definition with points in the simplex, it is also satisfied for our definition.

\begin{remark}
Another option for defining the divisions and the topology would have been to consider $$D^{n-1}=\big\{(z_1,\ldots,z_{n-1})\in\R^{n-1}\colon 0\leq z_1\leq\cdots\leq z_{n-1}\leq 1\big\}/\sim,$$ where $\sim$ is the equivalence relation defined on $\R^{n-1}$ by $$(z_1,\ldots,z_{n-1})\sim(z_1',\ldots,z_{n-1}')\quad\mbox{if and only if}\quad \{z_1,\ldots,z_{n-1}\}=\{z_1',\ldots,z_{n-1}'\}.$$ Two points in $\R^{n-1}$ are equivalent if the sets consisting of their coordinates are equal. The space $D^{2}$ is the classical dunce hat space. The spaces $D^{n-1}$ have been more systematically studied by Andersen, Marjanovi\'c, and Schori~\cite{andersen1993symmetric}, and generalized by Kozlov~\cite{koz}.  
There is an one-to-one correspondence between the points in $D^{n-1}$ and the divisions of the cake in at most $n$ pieces: the coordinates of a point in $D^{n-1}$ are the endpoints of the intervals (except $0$ and $1$). The Hausdorff metric on $\R^{n-1}$ is compatible with the equivalence relation $\sim$ and induces then a metric on $D^{n-1}$. Convergences of divisions could have been considered according to this metric: it does not change the topology. But we have not made this choice because we have thought that it makes the description of the problem less intuitive and more cumbersome.
\end{remark}


\section{``Sperner lemmas'' with symmetries}\label{sec:sperner}

The purpose of this section is to state and prove combinatorial fixed point results (Theorem~\ref{thm:sperner-symmetry-prime} and Proposition~\ref{prop:sperner-symmetry-4}) in the spirit of the Sperner lemma. One of these results, which will play a crucial role in our proof of Theorem~\ref{main}, is close to Conjecture 4.15 in the paper by Segal-Halevi \cite{segal2017fairly}, who realized that it would be the main step to a proof of his conjecture on cake-division.  It deals with triangulations of $\Delta^{n-1}$ -- the standard $(n-1)$-dimensional simplex -- that satisfy some symmetry.

\subsection{Statements}
For each $j\in[n]$, we introduce two maps. The first map is the permutation $\rho^j$ on $[n]$ defined by 
$$\rho^j(i)=\left\{\begin{array}{ll} j & \mbox{if $i=1$} \\ i-1 & \mbox{if $2\leq i\leq j$} \\ i & \mbox{if $i\geq j+1$.}\end{array}\right.$$ The second map, denoted $r^j$, is the linear self-map of $\R^n$ defined by $r^j(\ee_i)=\ee_{\rho^j(i)}$ for $i=1,\ldots,n$, where the $\ee_i$ are the unit vectors of the standard basis of $\R^n$. Note that $r^j$ is a bijection that induces a permutation of the facets of $\Delta^{n-1}$, and that when $j=1$, both maps are the identity map.

We identify $\Delta^{n-1}$ with $\{(x_1,\ldots,x_n)\in\R^n_+\colon\sum_{i=1}^nx_i=1\}$. The facet of $\Delta^{n-1}$ defined by $x_j=0$ is the {\em $\widehat{j}$-facet}. Note that the vertices of $\Delta^{n-1}$ are the $\ee_i$'s. For a point $\xx\in\Delta^{n-1}$, we define $J_{\xx}$ to be the set of all $j$ such that $\xx$ belongs to the $\widehat{j}$-facet.

Triangulations and labelings considered in the paper will in general enjoy certain symmetries. We say that a triangulation $\T$ of $\Delta^{n-1}$ is {\em nice} if $r^j(\sigma)\in\T$ for every $j\in[n]$ and every simplex $\sigma\in\T$ included in the $\widehat{1}$-facet. (Such triangulations are called ``friendly'' in Segal-Halevi's paper.) Similarly, consider a labeling $\Lambda$ of the vertices $v$ of $\T$ with subsets of $[n]$. We say such a labeling is {\em nice} if $\Lambda\big(r^j(v)\big)=\rho^j\big(\Lambda(v)\big)$ for every $j\in [n]$ and every vertex $v$ of $\T$ included in the $\widehat{1}$-facet.


\begin{theorem}\label{thm:sperner-symmetry-prime}
Let $\T$ be a nice triangulation of $\Delta^{n-1}$ and let $\Lambda$ be a nice labeling of its vertices with nonempty proper subsets of $[n]$. If $n$ is a prime number, then there is an $(n-1)$-dimensional simplex $\tau\in\T$ such that it is possible to pick a distinct label in each $\Lambda(u)$ when $u$ runs over the vertices of $\tau$.
\end{theorem}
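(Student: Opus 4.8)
The plan is to reformulate the statement as a non-existence result: suppose, for contradiction, that \emph{every} $(n-1)$-simplex $\tau \in \T$ is ``non-rainbow,'' meaning one cannot pick a system of distinct representatives from the labels $\{\Lambda(u) : u \in \tau\}$. By Hall's theorem, this means for each such $\tau$ there is a subset $S_\tau$ of its vertices with $\left|\bigcup_{u \in S_\tau}\Lambda(u)\right| < |S_\tau|$. The goal is to use this local deficiency, together with the symmetry of $\T$ and $\Lambda$ on the boundary, to build a simplicial (or chain) map that produces a topological contradiction — the title's mention of ``chain map'' and ``higher-dimensional Dunce hat'' signals exactly this.

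Concretely, I would try to construct from $\Lambda$ a labeling $\lambda$ of the vertices of $\T$ by \emph{single} elements of $[n]$ (a selection $\lambda(v) \in \Lambda(v)$) that remains nice, i.e.\ $\lambda(r^j(v)) = \rho^j(\lambda(v))$ on the $\widehat 1$-facet; niceness of $\Lambda$ makes such an equivariant selection possible after choosing it freely on a fundamental domain of the boundary and on the interior. Then the assumption ``no rainbow simplex'' becomes ``$\lambda$ is a Sperner-type labeling with no fully-colored facet.'' Now view $\lambda$ as a simplicial map $f$ from $\sd \T$ (or $\T$ itself) to the boundary complex $\partial \Delta^{n-1}$, or better, to the quotient space $D^{n-1}$ (the higher-dimensional Dunce hat) described in the Remark: the point is that $\lambda$ takes values in $[n]$ and never hits all $n$ colors on a simplex, so it lands in the $(n-2)$-skeleton, hence factors through $\partial\Delta^{n-1}$; and the niceness/equivariance means the induced map on $\partial\Delta^{n-1}$ descends to the quotient. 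Tracking what $f$ does to the fundamental class, one gets a degree-type or homological invariant that must be both nonzero (because $f$ restricted to the boundary is essentially the identity quotient map, whose degree one computes) and zero (because $f$ extends over the whole disk $\Delta^{n-1}$, or because it misses a point). The primality of $n$ enters precisely here: the relevant homology group of $D^{n-1}$, or the order of the relevant covering-space deck group, is $\Z/n$ (compare $D^2$ = Dunce hat, contractible, versus the general case studied by Andersen--Marjanovi\'c--Schori and Kozlov), and a chain map argument à la the $\Z/p$-index / Dold-type theorems forces a contradiction only when $n$ is prime, because one needs $n$ to be a prime to rule out the ``partial'' obstructions — a composite $n$ would allow the equivariant map to factor through a proper subgroup and escape.

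The execution I envisage: (1) pass to a suitable subdivision so the deficient sets $S_\tau$ can be organized coherently; (2) build an equivariant single-valued selection $\lambda$ and encode the no-rainbow hypothesis; (3) construct the chain map $C_*(\T) \to C_*(D^{n-1})$ (or to a model with $\Z/n$ action) induced by $\lambda$, checking it is well-defined using niceness; (4) compute its effect on top homology two ways — once via the boundary restriction (using that on $\partial\Delta^{n-1}$ the labeling is forced by symmetry to be, up to the $\rho^j$-twist, the standard one, so the boundary map has ``degree'' coprime to $n$), and once via the fact that it extends over the filled simplex (so the boundary map is a boundary, i.e.\ its class is killed, which in the $\Z/n$-homology of $D^{n-1}$ means it is $\equiv 0 \pmod n$); (5) conclude $1 \equiv 0 \pmod n$, contradiction.

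The main obstacle is step (3)–(4): making the chain-map / degree computation genuinely rigorous, i.e.\ pinning down \emph{which} space (the Dunce-hat quotient $D^{n-1}$, or its relevant cover, or an explicit $\Z/n$-chain complex) carries the right $\Z/n$-invariant, verifying that the no-rainbow hypothesis really forces the map to land in the ``bad'' skeleton in an equivariant way, and checking that the two computations of the invariant are legitimately comparable (this is where a careful choice of base simplex, orientation conventions, and the interplay between the permutations $\rho^j$ and the covering action all have to line up). I also expect a genuine subtlety in showing that the contradiction hypothesis (Hall-deficiency on \emph{every} facet) can be upgraded to a \emph{globally consistent} single-valued nice labeling with no rainbow facet — a priori the deficient witnesses $S_\tau$ need not glue, and one may need to iterate a subdivision-and-relabeling argument, which is presumably why the statement is phrased for an arbitrary nice triangulation (so that one is free to subdivide).
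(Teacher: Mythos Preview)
Your proposal has a genuine gap at the step where you pass from the nice set-valued labeling $\Lambda$ to a nice \emph{single-valued} selection $\lambda(v)\in\Lambda(v)$: such an equivariant selection need not exist. The maps $r^j$ do not act freely on $\partial\Delta^{n-1}$; for instance $r^2(\ee_n)=\ee_{\rho^2(n)}=\ee_n$ for every $n\ge 3$, so $\ee_n$ is a fixed point of $r^2$ lying on the $\widehat 1$-facet, and niceness forces $\Lambda(\ee_n)=\rho^2(\Lambda(\ee_n))$, i.e.\ $\Lambda(\ee_n)$ must be invariant under the transposition $\rho^2=(1\;2)$. The choice $\Lambda(\ee_n)=\{1,2\}$ is then a perfectly legal nice label (nonempty, proper), yet no element of $\{1,2\}$ is fixed by $\rho^2$, so no nice $\lambda$ can be defined at that vertex. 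Subdividing does not help, since $\ee_n$ survives as a vertex carrying the same label. This also undermines your subsequent degree claim: even in instances where a selection does exist, the labeling on the interior of the $\widehat 1$-facet is essentially unconstrained, so the boundary restriction can have any degree, including zero. And the higher Dunce hat $D^{n-1}$ is a red herring here: it appears in the paper only as an alternative model for the space of cake divisions, not as the target of any map in the proof of the theorem, and it carries no $\Z/n$ obstruction of the kind you invoke.

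The paper avoids exactly this obstruction by never selecting an element of $\Lambda(v)$. It sends $v$ to the barycenter $\bb^{\Lambda(v)}\in\Delta^{n-1}$, which is automatically equivariant since $r^j(\bb^S)=\bb^{\rho^j(S)}$ (Lemma~\ref{lem:sym_b}). It then shrinks $\T$, fills the resulting collar with a canonical Sperner-type labeling, and applies the determinant-sum identity of Lemma~\ref{lem:sperner_det}: the total determinant sum over the enlarged triangulation equals $\pm 1$, while the symmetry makes the collar contribution equal to $n$ times a single term. Hence the sum over $\T$ itself equals $\pm 1$ minus a multiple of $n$; since every label is a \emph{proper} subset, every determinant appearing has denominator a product of integers strictly smaller than $n$, and when $n$ is prime such a denominator is coprime to $n$, so this sum cannot vanish. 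A simplex with nonzero determinant then yields the desired system of distinct representatives via Lemma~\ref{lem:cover}. Primality thus enters arithmetically, through control of the denominators, and not via any $\Z/n$-homology or free group action.
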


 We do not know whether Theorem~\ref{thm:sperner-symmetry-prime} is true for any nonprime $n$. Up to additional conditions on the labeling, we are however able to prove that a special case holds when $n=4$.

The {\em supporting face} $\supp(\xx)$ of a point $\xx$ in $\Delta^{n-1}$ is the inclusion-minimal face of $\Delta^{n-1}$ containing this point. Two faces are {\em comparable by inclusion} if one of them is included in the other (they can be equal).

\begin{proposition}\label{prop:sperner-symmetry-4}
Let $\T$ be a nice triangulation of $\Delta^{3}$ such that the supporting faces of any two adjacent vertices are comparable by inclusion. Suppose that $\Lambda$ is a nice labeling of its vertices with nonempty subsets of $\{1,2,3,4\}$ such that for every vertex $v$, the subset $\Lambda(v)$ is either $J_v$ or a singleton not belonging to $J_v$. Then there is a $3$-dimensional simplex $\tau\in\T$ such that it is possible to pick a distinct label in each $\Lambda(u)$ when $u$ runs over the vertices of $\tau$.
\end{proposition}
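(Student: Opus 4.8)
The plan is to argue by contradiction. Assume that no $3$-simplex of $\T$ admits a system of distinct representatives (SDR) for the four label sets at its vertices — call such a simplex \emph{rainbow} — and deduce a contradiction by a Sperner-type argument on the pseudomanifold $\Delta^3$: either a parity (hand-shaking) count of rainbow simplices against a count of ``doors'' on the $2$-sphere $\partial\Delta^3\cong S^2$, or, equivalently, the non-existence of a certain chain map forced by the ``no rainbow'' hypothesis. In either formulation the problem is reduced to understanding the labeling on $\partial\Delta^3$, where the niceness conditions give us control.

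First I would read off the consequences of the two hypotheses on $\Lambda$. If $\supp(v)$ has codimension $0$ or $1$ then $J_v$ is $\varnothing$ or a singleton, so (labels being nonempty) $\Lambda(v)$ is a singleton; hence multi-valued labels occur only on the $1$-skeleton of $\Delta^3$, i.e. on its edges and vertices, and there $\Lambda(v)$ is either the full set $J_v$ or a singleton disjoint from $J_v$. The comparability hypothesis says the supporting faces of two adjacent vertices are nested, so along every edge of $\T$ the sets $J_v$ form a chain; combined with the constraint on $\Lambda$ this makes the list of label-set combinations that can appear on a simplex of $\T$ lying in a prescribed face of $\Delta^3$ completely explicit. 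This bookkeeping is what will let me actually carry out the count.

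The heart of the proof is then the analysis on $\partial\Delta^3$. By niceness the four facets of $\Delta^3$ are the images of the $\widehat{1}$-facet under $r^1=\id, r^2, r^3, r^4$, with the labeling transported by $\rho^1,\dots,\rho^4$; so the boundary contribution to the count decomposes as the contribution of the triangulated, labeled $\widehat{1}$-facet together with corrections supported on the $1$-skeleton of $\Delta^3$, where — by the previous paragraph — only a short explicit list of configurations is possible. I would show that, once these overlaps are accounted for, the net boundary count forces a rainbow $3$-simplex inside. This is the step where the non-primality of $4$ would ordinarily be fatal: the ``equivariant obstruction'' driving the argument lives in a group with $\Z/2$-torsion and need not vanish for an arbitrary nice labeling, but the two extra hypotheses of Proposition~\ref{prop:sperner-symmetry-4} are exactly tailored so that it does. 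I expect this boundary computation — in effect a low-dimensional, hands-on substitute for the primality input of Theorem~\ref{thm:sperner-symmetry-prime}, morally reducing to the $n=3$ (prime) instance of the phenomenon — to be the main obstacle.

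A secondary difficulty, worth flagging, is purely combinatorial: setting up the door-counting correctly when label sets are not singletons. Unlike the classical Sperner situation, ``admits an SDR onto $\{1,2,3\}$'' is a Hall-type condition, so one must fix the target $3$-set once and for all and check carefully, e.g., that a $2$-simplex with two vertices both labeled by the same pair is classified consistently; getting this interface between SDRs and the parity argument right is where most of the routine (but delicate) work will go.
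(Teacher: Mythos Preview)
Your proposal is a plan, not a proof, and the plan points at the wrong invariant. A door-counting/parity argument is intrinsically a $\bmod\ 2$ count, and here the boundary contribution is $4$ times a single quantity (the four facets are the images of the $\widehat 1$-facet under $r^1,\dots,r^4$ and contribute identically after relabeling); so a mod-$2$ count on $\partial\Delta^3$ sees $0$ and gives no obstruction. Your own sentence about ``$\Z/2$-torsion'' is the symptom: nothing you wrote distinguishes $n=4$ from $n=6$, and indeed you never say concretely what the two extra hypotheses buy. The ``boundary computation'' you flag as the main obstacle is not merely hard in your framework --- it is, as stated, insufficient.

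The paper does something quite different and much shorter. Set $\lambda(v)=\bb^{\Lambda(v)}$ (the barycenter of the face indexed by $\Lambda(v)$), shrink $\T$ slightly and extend to a triangulation $\T'$ of $\Delta^3$ with unit-vector labels on the outer shell, and apply the determinant version of Sperner (Lemma~\ref{lem:sperner_det}) to get
\[
\big|(\det_\sharp\circ\lambda_\sharp)(t')\big|=1,\qquad
(\det_\sharp\circ\lambda_\sharp)(t') = (\det_\sharp\circ\lambda_\sharp)(t)+4\,(\det_\sharp\circ\lambda_\sharp)(c_1),
\]
exactly as in the proof of Theorem~\ref{thm:sperner-symmetry-prime}. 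The \emph{only} place the extra hypotheses enter is the very last line: they force every nonzero $\det\big(\lambda(v_1),\dots,\lambda(v_4)\big)$ for $\{v_1,\dots,v_4\}\in\T$ to be a rational with denominator dividing $3!=6$. (Reason: the supporting faces of the $v_i$ form a chain, so two vertices with $\Lambda(v)=J_v$ and $|J_v|$ equal would give identical columns and determinant $0$; hence the non-singleton columns have pairwise distinct $|J_v|\in\{1,2,3\}$, and the product of the denominators divides $6$.) Since $4\nmid 6$, the equation $\pm 1 = (\det_\sharp\circ\lambda_\sharp)(t)+4\cdot(\text{something})$ forces $(\det_\sharp\circ\lambda_\sharp)(t)\neq 0$, whence some simplex of $\T$ has nonzero determinant and, by Lemma~\ref{lem:cover}, an SDR. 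This is a rational/divisibility argument, not a parity one; your plan never produces the number~$6$, and that number is the whole point.
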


Replacing $4$ by a prime number $n$ in the statement of Proposition~\ref{prop:sperner-symmetry-4} leads to a valid result, since such a result is a direct consequence of Theorem~\ref{thm:sperner-symmetry-prime}: this latter theorem requires much weaker constraints on the triangulation and the labeling. For the proof of Theorem~\ref{main}, such a result would actually be enough, but we think that Theorem~\ref{thm:sperner-symmetry-prime} has its own merit.



\subsection{Preliminaries}

For the proofs, we assume basic knowledge in algebraic topology; see the book by Munkres~\cite{Mun}, and especially Chapter 1 for the notions used hereafter (abstract simplicial complexes, chains, chain maps, etc.). To that traditional tools, we add the following ones. Let $\Rsf$ be the abstract simplicial complex whose vertices are the points of $\R^n$ and whose maximal simplices are all possible $n$-subsets of $\R^n$. (Note that $\Rsf$ is an abstract simplicial complex, two $(n-1)$-dimensional simplices may have intersecting convex hulls, and the vertices of a simplex can be aligned.) For an oriented $(n-1)$-dimensional simplex $[\vv_1,\ldots,\vv_n]$ of $\Rsf$, we define $\det_{\sharp}([\vv_1,\ldots,\vv_n])$ to be $\det(\vv_1,\ldots,\vv_n)$. We extend then the definition of $\det_{\sharp}$ by linearity for all elements in $C_{n-1}(\Rsf,\Z)$.

The following generalization of the Sperner lemma will play a key role in the proof of Theorem~\ref{thm:sperner-symmetry-prime} and Proposition~\ref{prop:sperner-symmetry-4}. Contrary to the latter results, there is no particular assumption on the triangulation $\T$ and on $n$.

\begin{lemma}\label{lem:sperner_det}
Let $\T$ be a coherently oriented triangulation of $\Delta^{n-1}$. Consider a labeling $\lambda$ of the vertices in $V(\T)$ with points in $\Delta^{n-1}$, such that, for every vertex $v\in V(\T)$, the point $\lambda(v)$ lies in the affine hull of $\supp(v)$.
Then $$\left|\sum_{[v_1,\ldots,v_n]}\det(\lambda(v_1),\ldots,\lambda(v_n))\right|=1,$$ where $[v_1,\ldots,v_n]$ runs over the positively oriented $(n-1)$-dimensional simplices of $\T$.
\end{lemma}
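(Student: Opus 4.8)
The plan is to show that $\det_\sharp$ applied to the image (under the chain map induced by $\lambda$) of the fundamental cycle of $\T$ equals $\pm 1$, by a degree/homology argument that only uses the behavior of $\lambda$ on the boundary $\partial\Delta^{n-1}$. Write $\Lambda_\sharp\colon C_{n-1}(\T,\Z)\to C_{n-1}(\Rsf,\Z)$ for the simplicial chain map induced by the vertex map $\lambda$ (a vertex map between simplicial complexes always induces a chain map once we note that $\Rsf$ has every $n$-subset of $\R^n$ as a simplex, so the image of any oriented simplex is a legitimate chain). The sum in the statement is exactly $\det_\sharp\bigl(\Lambda_\sharp(z)\bigr)$, where $z=\sum_{[v_1,\dots,v_n]}[v_1,\dots,v_n]$ is the sum over positively oriented top simplices of $\T$, i.e. the fundamental cycle of the coherently oriented triangulation relative to its boundary.

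First I would record that $\det_\sharp\circ\partial = 0$ on $C_{n-1}(\Rsf,\Z)$: expanding $\partial[\vv_1,\dots,\vv_n]$ gives an alternating sum of determinants of the $(n-1)\times n$ matrices obtained by deleting one row, but $\det_\sharp$ is only defined on $(n-1)$-simplices, so more precisely I want the statement that $\det_\sharp$ composed with $\partial\colon C_n(\Rsf)\to C_{n-1}(\Rsf)$ vanishes — this is just the Laplace/cofactor identity, and it says that $\det_\sharp$ descends to a functional on $H_{n-1}$-type classes, in particular it only depends on $\Lambda_\sharp(z)$ through its boundary $\partial\Lambda_\sharp(z)=\Lambda_\sharp(\partial z)$. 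Concretely, since $\det_\sharp$ kills boundaries, $\det_\sharp(\Lambda_\sharp(z))$ depends only on the chain $\Lambda_\sharp(\partial z)\in C_{n-2}(\Rsf,\Z)$ supported on the boundary — here is where the hypothesis that $\lambda(v)$ lies in the affine hull of $\supp(v)$ enters: it forces $\lambda$ to map the subcomplex triangulating the $\widehat j$-facet into the hyperplane $x_j=0$, hence $\Lambda_\sharp$ restricted to $\partial\T$ is, facet by facet, a vertex map into the corresponding $(n-2)$-dimensional coordinate simplex's affine hull.

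The key step — and the one I expect to be the main obstacle to write cleanly — is the induction on $n$: I would prove the lemma by induction on dimension, using that $\det_\sharp(\Lambda_\sharp(z))$ can be computed by a Laplace expansion along, say, the $n$-th coordinate, which expresses it as an alternating sum over the facets $\widehat j$ of quantities of the form $\det$ applied to the $(n-1)$-dimensional restriction of $\lambda$ to the triangulation of that facet. On each facet, $\lambda$ restricted there satisfies exactly the hypothesis of the lemma one dimension down (points go to affine hulls of their supporting faces, which are faces of that facet), so by the inductive hypothesis each such restricted sum is $\pm 1$; a careful bookkeeping of signs and of the fact that a coherently oriented triangulation induces a coherently oriented triangulation on each boundary facet with the induced orientation then collapses the alternating sum to $\pm 1$. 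The base case $n=1$ is trivial ($\Delta^0$ is a point, $\lambda$ is the identity, the determinant is $1$). An alternative to the explicit Laplace bookkeeping, which I would mention as a cleaner route, is to observe that $\det_\sharp\colon C_{n-1}(\Rsf,\Z)\to\Z$ is, up to identification, the evaluation of an $(n-1)$-cochain that represents a generator of $H^{n-1}$ of the pair $(\R^n, \R^n\setminus\{0\})$ pulled back appropriately, so that $\det_\sharp(\Lambda_\sharp(z))$ is the degree of the map $\partial\Delta^{n-1}\to S^{n-2}$ obtained by normalizing $\lambda|_{\partial\T}$; the affine-hull hypothesis makes this boundary map agree, up to homotopy, with the identity on $S^{n-2}$ (each facet maps to the corresponding coordinate hyperplane, and the gluing is the standard one), whence the degree is $\pm 1$. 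Either way, the heart of the matter is the reduction to the boundary via "$\det_\sharp$ vanishes on boundaries" plus an inductive or degree-theoretic identification of the boundary contribution; the routine-but-fiddly part is the orientation signs in the Laplace expansion, which I would organize by fixing once and for all the standard coherent orientation on $\Delta^{n-1}$ and its induced orientation on each $\widehat j$-facet.
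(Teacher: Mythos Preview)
Your overall strategy---induction on $n$ using a Laplace expansion---is exactly what the paper does, but two specific steps are garbled in a way that matters.

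First, the reduction to the boundary. You write that $\det_\sharp\circ\partial\colon C_n(\Rsf)\to\Z$ vanishes, hence ``$\det_\sharp$ kills boundaries,'' hence $\det_\sharp(\Lambda_\sharp(z))$ depends only on $\Lambda_\sharp(\partial z)$. The logic is backwards: ``kills boundaries'' gives a well-defined functional on $H_{n-1}$, but $\Lambda_\sharp(z)$ is not a cycle. What you actually need is that $\det_\sharp$ vanishes on \emph{cycles}, and this is \emph{false} on $\Rsf$ in general (take the boundary of an abstract $n$-simplex on $n+1$ generic points of $\R^n$; its image under $\det_\sharp$ is the determinant of the $(n+1)\times(n+1)$ matrix with a row of ones on top of those points, which need not be zero). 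It becomes true precisely when all vertices lie in the hyperplane $\sum_i x_i=1$, because then that row of ones is the sum of the remaining rows. The paper uses this constraint explicitly: its Claim replaces the last row of $\det(\lambda(v_1),\ldots,\lambda(v_n))$ by ones via $\sum_j a_{j,i}=1$ before expanding. You should invoke $\lambda(v)\in\Delta^{n-1}$ at exactly this point.

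Second, the inductive step. You say the Laplace expansion along the $n$th coordinate expresses $\det_\sharp(\Lambda_\sharp(z))$ as an alternating sum over the facets $\widehat j$, each of which contributes $\pm 1$ by induction, and then ``bookkeeping of signs collapses the alternating sum to $\pm 1$.'' This is not how it works: after projecting out the $n$th coordinate, the contribution from the $\widehat j$-facet for $j\neq n$ is \emph{zero}, not $\pm 1$, because $\lambda$ maps that facet into the hyperplane $x_j=0$, and the projection keeps the $j$th coordinate, so every column of the resulting $(n-1)\times(n-1)$ determinant has a zero in row $j$. Only the $\widehat n$-facet survives, and \emph{there} the induction hypothesis gives $\pm 1$. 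This is precisely the paper's argument: it introduces the projection $\proj$ onto the first $n-1$ coordinates, proves the identity $(\det_\sharp\circ\lambda_\sharp)=(-1)^{n-1}(\det_\sharp\circ\proj_\sharp\circ\partial\circ\lambda_\sharp)$, observes that the $\widehat j$-facet terms vanish for $j\neq n$, and applies induction to the $\widehat n$-facet.

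Your degree-theoretic alternative (the PL map $\lambda$ sends $(\Delta^{n-1},\partial\Delta^{n-1})$ to itself, facet to facet, so its degree is $\pm 1$) is a legitimate and genuinely different route that avoids the explicit Laplace computation; it would need the observation that $\det_\sharp(\Lambda_\sharp(z))$ equals this degree, which again rests on the hyperplane constraint.
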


\begin{proof}
We proceed by induction on $n$. If $n=1$, the result is immediate: $\lambda(\ee_1)=\ee_1$.

Suppose $n\geq 2$ and let $t$ be the element of $C_{n-1}(\T,\Z)$ that is the formal sum of all positively oriented $(n-1)$-dimensional simplices of $\T$. Our goal is to prove that $(\det_{\sharp}\circ\lambda_{\sharp})(t)=1$, where we interpret $\lambda$ as a simplicial map from $\T$ to $\Rsf$.

Let $\proj\colon\R^n\rightarrow\R^{n-1}$ be the projection on the first $n-1$ coordinates, defined by
$$\proj(\ee_i)=\left\{\begin{array}{ll}\ee'_i& \mbox{if $i\neq n$} \\ \zero & \mbox{otherwise,}\end{array}\right.$$ where the $\ee_i'$ are the unit vectors of the standard basis of $\R^{n-1}$.
We interpret $\proj$ as a simplicial map from $\Rsf$ to $\Rsf'$ too, where $\Rsf'$ is the abstract simplicial complex whose vertices are the points of $\R^{n-1}$ and whose maximal simplices are the $(n-1)$-subsets of $\R^{n-1}$.

\begin{claim}
The following equality holds: $(\det_{\sharp}\circ\lambda_{\sharp})=(-1)^{n-1}(\det_{\sharp}\circ \proj_{\sharp}\circ\partial\circ \lambda_{\sharp})$.
\end{claim}

\begin{proof}
We prove the equality for an oriented simplex $[v_1,\ldots,v_n]$, which is enough to get the conclusion. Define $\aa_i=(a_{1,i},\ldots,a_{n,i})$ to be $\lambda(v_i)$ and $\aa_i'$ to be $\proj(\aa_i)$. By definition of $\proj$, we have  $\aa'_i=(a_{1,i},\ldots,a_{n-1,i})$. If two of the $\aa_i$'s are equal, the left-hand and right-hand terms of the equality to prove are both equal to zero when applied to the considered oriented simplex. We can thus assume that all $\aa_i$'s are distinct.

Compute the left-hand term on the considered oriented simplex: $$\begin{array}{rcl}
(\det_{\sharp}\circ\lambda_{\sharp})([v_1,\ldots,v_n]) & = & \ds{\det\left(\aa_1,\ldots,\aa_n\right)}\smallskip \\ 
& = & \ds{\left|\begin{array}{ccc} a_{1,1} & \cdots & a_{1,n} \\ \vdots & & \vdots \\ a_{n-1,1} & \cdots & a_{n-1,n} \\ 1 & \cdots & 1 \end{array}\right|}\smallskip\\
&  = & \ds{\sum_{i=1}^n(-1)^{n-i}\det\left(\aa'_1,\ldots,\widehat{\aa'_i},\ldots,\aa'_n\right)},
\end{array}
$$ where the second equality follows from row operations and the fact that $\sum_{j=1}^n a_{j,i} = 1$.

Similarly, compute the right-hand term on the considered oriented simplex: 
$$\begin{array}{rcl}
(\det_{\sharp}\circ \proj_{\sharp}\circ\partial\circ\lambda_{\sharp})([v_1,\ldots,v_n]) & = & 
\ds{(\det_{\sharp}\circ \proj_{\sharp})\left(\sum_{i=1}^n(-1)^{i-1}[\aa_1,\ldots,\widehat{\aa_i},\ldots,\aa_n]\right)} \\
& = & \ds{\sum_{i=1}^n(-1)^{i-1}\det\left(\proj(\aa_1),\ldots,\widehat{\proj(\aa_i)},\ldots,\proj(\aa_n)\right)}.
\end{array}
$$
We have thus in any case $(\det_{\sharp}\circ\lambda_{\sharp})([v_1,\ldots,v_n])=(-1)^{n-1}(\det_{\sharp}\circ \proj_{\sharp}\circ\partial\circ\lambda_{\sharp})([v_1,\ldots,v_n])$.
\end{proof}

According to this claim and the commutation of $\partial$, we have
$$(\det_{\sharp}\circ\lambda_{\sharp})(t)=(-1)^{n-1}(\det_{\sharp}\circ \proj_{\sharp}\circ \lambda_{\sharp})\left(\sum_{j=1}^nt_j\right),$$
where $t_j$ is the term in $\partial t$ supported by the $\widehat{j}$-facet of $\Delta^{n-1}$. For $j\neq n$, the $j$th coordinate of $(\proj\circ\lambda)(u)$ for any vertex $u\in V(\T)$ on the $\widehat{j}$-facet is equal to $0$. Hence, $(\det_{\sharp}\circ \proj_{\sharp}\circ \lambda_{\sharp})(t_j)=0$ when $j\neq n$. Therefore,  $(\det_{\sharp}\circ\lambda_{\sharp})(t)=(-1)^{n-1}(\det_{\sharp}\circ \proj_{\sharp}\circ \lambda_{\sharp})(t_n)$.

Let $\T'$ be the triangulation of the $\widehat{n}$-facet of $\Delta^{n-1}$ induced by $\T$. We identify this facet with $\Delta^{n-2}=\{(x_1,\ldots,x_{n-1})\in\R_+^{n-1}\colon\sum_{i=1}^{n-1}x_i=1\}$. The map $\proj\circ\lambda$ is a labeling of the vertices of $\T'$ with elements of $\Delta^{n-2}=\{(x_1,\ldots,x_{n-1})\in\R_+^{n-1}\colon\sum_{i=1}^{n-1}x_i=1\}$, which satisfies the condition of the lemma for $n-1$. The chain $t_n$ is the formal sum of all positively oriented simplices of $\T'$. By induction, we have thus  $\left|(\det_{\sharp}\circ\proj_{\sharp}\circ \lambda_{\sharp})(t_n)\right|=1$, which implies 
$\left|(\det_{\sharp}\circ \lambda_{\sharp})(t)\right|=1$.
\end{proof}

A lemma by Yakar Kannai~\cite[Lemma 3]{kannai2013using} has the same condition and almost the same conclusion as Lemma~\ref{lem:sperner_det}. Regarding the conclusion, the lemma of Kannai ensures the existence of a subdivision of $\T$ for which the formula of Lemma~\ref{lem:sperner_det} holds, while we know henceforth that it holds for $\T$ itself.

Florian Frick (personal communication) noted that the approach by Andrew McLennan and Rabee Tourky~\cite{mclennan2008using} for proving the Sperner lemma can also be used here to provide an alternative proof of Lemma~\ref{lem:sperner_det}.

%

For a nonempty subset $S$ of $[n]$, we define the points $\bb^S=(b_1^S,\ldots,b_n^S)$ of $\Delta^{n-1}$ by $$b_i^S=\left\{\begin{array}{ll}\ds{ \frac 1 {|S|}}& \mbox{if $i\in S$}\medskip \\ 0 & \mbox{otherwise.}\end{array}\right.$$
The point $\bb^S$ is the barycenter of the face whose vertex set is $\{ \ee_i\colon i\in S\}$. These points are subject to two easy lemmas, which will be useful in the sequel.

\begin{lemma}\label{lem:cover}
Suppose that $S_1,\ldots,S_n$ are nonempty subsets of $[n]$ such that $\det(\bb^{S_1},\ldots,\bb^{S_n})\neq 0$. Then it is possible to pick in each $S_j$ a distinct element of $[n]$.
\end{lemma}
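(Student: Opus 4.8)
The plan is to recognize the hypothesis $\det(\bb^{S_1},\ldots,\bb^{S_n})\neq 0$ as a linear-independence statement about the columns of a matrix, and to feed this into Hall's marriage theorem. Picking a distinct element of $[n]$ in each $S_j$ is precisely asking for a system of distinct representatives of the family $S_1,\ldots,S_n$, so by Hall's theorem it suffices to verify that $\big|\bigcup_{j\in K}S_j\big|\geq |K|$ for every $K\subseteq[n]$.

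The key observation I would record first is the following: for any $K\subseteq[n]$, writing $T=\bigcup_{j\in K}S_j$, each vector $\bb^{S_j}$ with $j\in K$ is supported on $S_j\subseteq T$, hence lies in the coordinate subspace $W_T=\{\xx\in\R^n\colon x_i=0\text{ for }i\notin T\}$, which has dimension $|T|$. Then I argue by contraposition. Suppose no distinct element can be picked in each $S_j$; by Hall's theorem there is $K\subseteq[n]$ with $|T|<|K|$. By the observation, the $|K|$ vectors $\{\bb^{S_j}\colon j\in K\}$ all lie in the subspace $W_T$ of dimension $|T|<|K|$, so they are linearly dependent. Consequently the $n\times n$ matrix with columns $\bb^{S_1},\ldots,\bb^{S_n}$ has linearly dependent columns, so its determinant vanishes, contradicting the hypothesis. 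Therefore Hall's condition holds, and a system of distinct representatives exists, which is exactly the claimed choice of distinct elements.

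There is essentially no obstacle here: the whole content is the remark that the determinant condition forces all $n$ columns $\bb^{S_j}$ to be linearly independent, and that this immediately rules out any Hall-violating subfamily, since such a subfamily would be confined to a coordinate subspace of too small dimension. No computation with the explicit entries $1/|S_j|$ is needed beyond noting that $\bb^{S_j}$ is supported on $S_j$.
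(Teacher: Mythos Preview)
Your proof is correct, but it takes a different route from the paper's. The paper argues directly from the Leibniz expansion of the determinant: since $\det(\bb^{S_1},\ldots,\bb^{S_n})=\sum_{\pi}\operatorname{sgn}(\pi)\prod_{i}b^{S_i}_{\pi(i)}\neq 0$, some permutation $\pi$ makes every factor $b^{S_i}_{\pi(i)}$ nonzero, and $b^{S_i}_{\pi(i)}\neq 0$ exactly means $\pi(i)\in S_i$; this $\pi$ is the desired choice of distinct elements. Your argument instead invokes Hall's marriage theorem and checks Hall's condition via a rank/support observation. Both are valid; the paper's approach is shorter and more self-contained (it needs nothing beyond the definition of the determinant), while yours makes the link to systems of distinct representatives explicit and would generalize verbatim to any family of vectors $\vv_j$ supported on $S_j$, not just the barycenters $\bb^{S_j}$.
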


\begin{proof}
Since the determinant is nonzero, there is a permutation $\pi$ of $[n]$ such that $b_{\pi(i)}^{S_i}$ is nonzero for all $i\in[n]$. We have therefore $\pi(i)\in S_i$ for all $i$ and all $\pi(i)$'s are distinct.
\end{proof}

\begin{lemma}\label{lem:sym_b}
The equality $r^j(\bb^S)=\bb^{\rho^j(S)}$ holds for every $j\in[n]$ and every nonempty subset $S$ of $[n]$.
\end{lemma}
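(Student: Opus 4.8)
The plan is simply to unwind both sides of the claimed identity using the explicit description of $\bb^S$ as a uniform convex combination of unit vectors, and then exploit the fact that $\rho^j$ is a bijection of $[n]$ (already noted when $r^j$ was introduced) so that the normalizing factor $1/|S|$ transports correctly.

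Concretely, I would proceed as follows. First, rewrite the definition of $\bb^S$ as $\bb^S=\frac{1}{|S|}\sum_{i\in S}\ee_i$, which is just a restatement of the coordinatewise definition and also of the ``barycenter'' description already given in the text. Second, apply $r^j$ and use its linearity together with $r^j(\ee_i)=\ee_{\rho^j(i)}$ to get $r^j(\bb^S)=\frac{1}{|S|}\sum_{i\in S}\ee_{\rho^j(i)}$. Third, re-index the sum: since $\rho^j$ is a permutation of $[n]$, its restriction to $S$ is a bijection onto $\rho^j(S)$, hence $|\rho^j(S)|=|S|$ and $\sum_{i\in S}\ee_{\rho^j(i)}=\sum_{k\in\rho^j(S)}\ee_k$. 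Combining, $r^j(\bb^S)=\frac{1}{|\rho^j(S)|}\sum_{k\in\rho^j(S)}\ee_k=\bb^{\rho^j(S)}$, as desired.

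There is essentially no obstacle here: the only point that needs to be invoked (rather than computed) is that $\rho^j$ is a bijection of $[n]$, which guarantees both that $|\rho^j(S)|=|S|$ and that the index substitution $i\mapsto\rho^j(i)$ is legitimate; everything else is a one-line computation. If one prefers, the same statement can be verified purely in coordinates: the $k$-th coordinate of $r^j(\bb^S)$ equals $b^S_{(\rho^j)^{-1}(k)}$, which is $1/|S|$ precisely when $(\rho^j)^{-1}(k)\in S$, i.e. when $k\in\rho^j(S)$, and $0$ otherwise — that is, the $k$-th coordinate of $\bb^{\rho^j(S)}$.
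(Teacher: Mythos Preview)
Your proof is correct and follows exactly the same approach as the paper: write $\bb^S=\frac{1}{|S|}\sum_{i\in S}\ee_i$, apply $r^j$ by linearity using $r^j(\ee_i)=\ee_{\rho^j(i)}$, and re-index via the bijection $\rho^j$ to obtain $\bb^{\rho^j(S)}$. The only difference is cosmetic---the paper records the chain of equalities in a single displayed line, whereas you also offer the optional coordinatewise verification.
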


\begin{proof}
We have 
$$r^j(\bb^S)=\frac 1 {|S|} \sum_{i\in S}r^j(\ee_i)=\frac 1 {|S|} \sum_{i\in S}\ee_{\rho^j(i)}=\frac 1 {|\rho^j(S)|} \sum_{i\in \rho^j(S)}\ee_i=\bb^{\rho^j(S)},$$ where the penultimate equality comes from the fact that $\rho^j$ is a permutation of $[n]$.
\end{proof}


We end this section with a simple property of the map $r^j$.

\begin{lemma}\label{lem:detr}
We have $\det(r^j)=(-1)^{j-1}$ for all $j$.
\end{lemma}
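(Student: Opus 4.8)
The map $r^j$ is by definition the permutation matrix associated with $\rho^j$, so $\det(r^j)$ equals the sign of the permutation $\rho^j$. The whole task is therefore to determine the cycle structure of $\rho^j$.

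The plan is to read off from the defining formula that $\rho^j$ fixes every index $i \geq j+1$ and acts on $\{1,2,\ldots,j\}$ by $1\mapsto j$, $j\mapsto j-1$, $j-1\mapsto j-2$, $\ldots$, $2\mapsto 1$. In cycle notation this is the single $j$-cycle $(1\ \ j\ \ j-1\ \ \cdots\ \ 2)$ (for $j=1$ it is the identity, an empty/length-$1$ cycle, consistently with $(-1)^{0}=1$). A cyclic permutation of length $j$ can be written as a product of $j-1$ transpositions, so its sign is $(-1)^{j-1}$. Hence $\det(r^j)=\operatorname{sign}(\rho^j)=(-1)^{j-1}$.

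There is essentially no obstacle here; the only thing to be slightly careful about is matching the orientation of the cycle to the formula (i.e.\ verifying that $\rho^j$ sends $i\mapsto i-1$ for $2\le i\le j$ really does close up into a single cycle through $j$ rather than splitting into smaller cycles), and handling the degenerate case $j=1$, where $\rho^1=\mathrm{id}$ and $r^1$ is the identity matrix. Alternatively, one could avoid cycle notation entirely and argue by induction on $j$, observing that $\rho^{j}$ differs from $\rho^{j-1}$ by composing with the transposition of $j-1$ and $j$, which flips the sign of the determinant each time; the base case $j=1$ gives $\det(r^1)=1$.
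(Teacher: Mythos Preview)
Your proof is correct. Your primary argument reads off the cycle structure of $\rho^j$ directly (a single $j$-cycle on $\{1,\ldots,j\}$, hence sign $(-1)^{j-1}$), while the paper uses precisely the inductive alternative you mention at the end: it writes $\rho^{j+1}=\tau^j\circ\rho^j$ with $\tau^j=(j\ \ j+1)$ and concludes from $\rho^1=\id$.
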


\begin{proof}
The determinant is the sign of the permutation $\rho^j$. Denote by $\tau^j$ the transposition that interchanges $j$ and $j+1$. We have $\rho^{j+1}=\tau^j\circ\rho^j$. The conclusion follows from the fact that the sign of $\rho^1$ is $1$ (it is the identity).
\end{proof}

\subsection{Proofs}\label{proofs}

\begin{figure}
\begin{center}
\includegraphics[width=15cm]{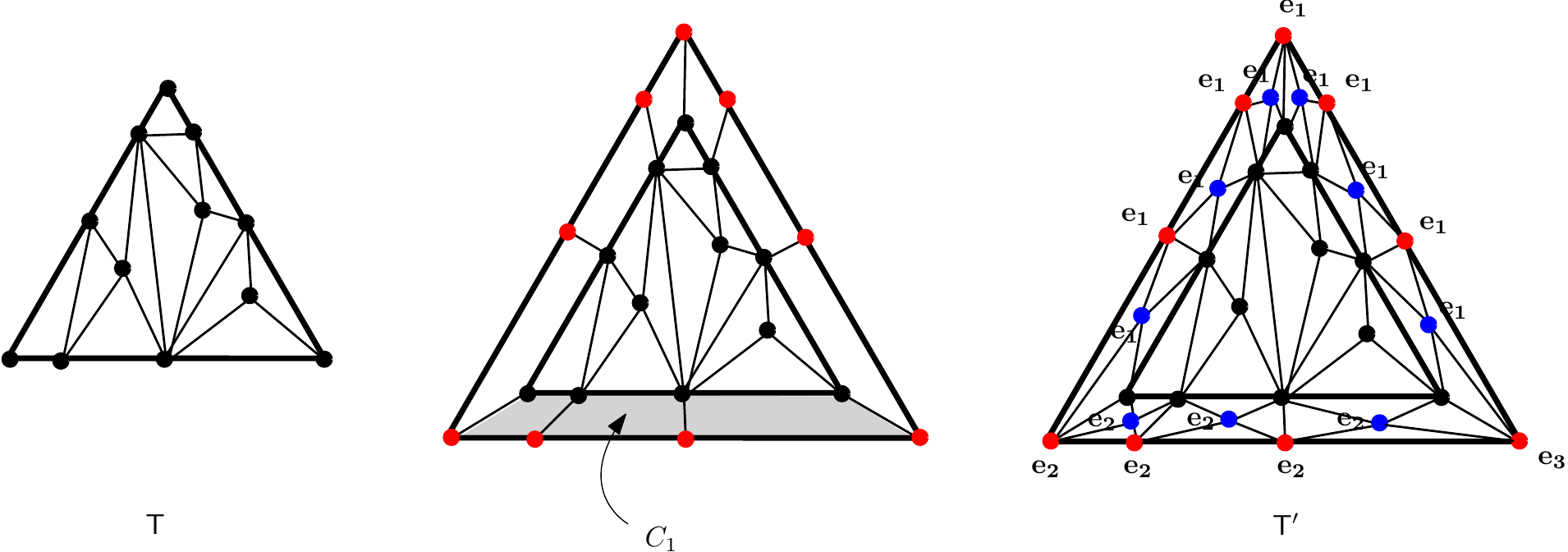}
\caption{Illustration of the construction of the triangulation $\T'$ from the triangulation $\T$ in the proof of Theorem~\ref{thm:sperner-symmetry-prime} and Proposition~\ref{prop:sperner-symmetry-4}. On the right-hand figure, the labels on the vertices in $V(\T')\setminus V(\T)$ have been displayed.}
\label{fig:triangulation}
\end{center}
\end{figure}

\begin{proof}[Proof of Theorem~\ref{thm:sperner-symmetry-prime}]
We put a shrunk copy of $\T$ inside $\Delta^{n-1}$. The non triangulated part of $\Delta^{n-1}$ admits a natural decomposition into $n$ cells, each being the convex hull $C_j$ of a $\widehat j$-facet and its shrunk copy. We complete $\T$ in a triangulation $\T'$ of $\Delta^{n-1}$ such that  $r^j(\sigma)\in\T'$ for every $j\in[n]$ and every simplex $\sigma\in\T'$ included in $C_1$. In particular, $\T'$ is nice. The triangulation of $C_j$ induced by $\T'$ is denoted $\C_j$.

(Such a triangulation $\T'$ is easily achieved by keeping on $\partial\Delta^{n-1}$ the triangulation induced by $\T$ before it was shrunk, and then proceeding to a subdivision of the prisms induced by the pairs $\sigma',\sigma$, where $\sigma'$ is an $(n-2)$-dimensional simplex of $\partial\Delta^{n-1}$ and $\sigma$ is its shrunk copy. This latter subdivision can be performed without adding new vertices in those simplices $\sigma,\sigma'$.)

Now, we label the vertices $v$ of $\T$ by $\lambda(v)=\bb^{\Lambda(v)}$. For the other vertices $u$ in $V(\T')\setminus V(\T)$, we proceed as follows: we consider the radial projection $u'$ from the barycenter of $\Delta^{n-1}$ on $\partial\Delta^{n-1}$. We label then $u$ by $\lambda(u)=\bb^{\{i(u)\}}=\ee_{i(u)}$, where $i(u)$ is the minimal index of a nonzero coordinate of the projection $u'$. (The construction of $\T'$ and of $\lambda$ is illustrated for $n=3$ on Figure~\ref{fig:triangulation}.)

The map $\lambda$ satisfies obviously the condition of Lemma~\ref{lem:sperner_det}. We have thus 
\begin{equation}\label{tprime} \left|(\det_{\sharp}\circ\lambda_{\sharp})(t')\right|=1,\end{equation} where $t'$ is the formal sum of all positively oriented $(n-1)$-dimensional simplices of $\T'$. (As in the proof of Lemma~\ref{lem:sperner_det}, the map $\lambda$ is seen as a simplicial map $\T'\rightarrow\Rsf$.)

On the other hand, we have by linearity 
\begin{equation}\label{decomp}
(\det_{\sharp}\circ\lambda_{\sharp})(t')=(\det_{\sharp}\circ\lambda_{\sharp})(t)+\sum_{j=1}^n(\det_{\sharp}\circ\lambda_{\sharp})(c_j),
\end{equation} where $t$ is the formal sum of all positively oriented $(n-1)$-dimensional simplices of $\T$ and $c_j$ the formal sum of all positively oriented $(n-1)$-dimensional simplices of $\C_j$.  As for other linear maps already met in this paper, we also  interpret $r^j$ as a simplicial self-map of $\Rsf$. Moreover, $r^j$ satisfies the following property.

\begin{claim}
The relation $(\lambda\circ r^j)(v)=(r^j\circ\lambda)(v)$ is satisfied for all $j$ and all $v\in V(\C_1)$.
\end{claim}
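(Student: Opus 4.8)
The plan is to verify the claimed equality $(\lambda\circ r^j)(v)=(r^j\circ\lambda)(v)$ separately for the two types of vertices $v\in V(\C_1)$: those coming from $V(\T)$ (i.e.\ lying on the $\widehat 1$-facet, since $\C_1$ is the cell between the $\widehat 1$-facet and its shrunk copy) and the new vertices in $V(\T')\setminus V(\T)$. For a vertex $v\in V(\T)$ on the $\widehat 1$-facet, by construction $\lambda(v)=\bb^{\Lambda(v)}$, so $(\lambda\circ r^j)(v)=\bb^{\Lambda(r^j(v))}=\bb^{\rho^j(\Lambda(v))}$, where the last step uses that $\Lambda$ is nice. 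On the other hand $(r^j\circ\lambda)(v)=r^j(\bb^{\Lambda(v)})=\bb^{\rho^j(\Lambda(v))}$ by Lemma~\ref{lem:sym_b}. So the two sides agree. Note this also implicitly uses that $r^j(v)\in V(\T')$, which holds because $\T'$ is nice and $r^j$ maps the $\widehat 1$-facet to the $\widehat j$-facet.

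For a new vertex $u\in V(\T')\setminus V(\T)$ lying in $C_1$, we have $\lambda(u)=\ee_{i(u)}$ where $i(u)$ is the smallest index of a nonzero coordinate of the radial projection $u'$ of $u$ onto $\partial\Delta^{n-1}$. Then $(r^j\circ\lambda)(u)=r^j(\ee_{i(u)})=\ee_{\rho^j(i(u))}$. For the other side, $r^j(u)$ is a new vertex of $\C_j$, and I need to identify its radial projection: since $r^j$ is linear and fixes the barycenter of $\Delta^{n-1}$ (as $r^j$ just permutes coordinates, hence fixes $(\tfrac1n,\ldots,\tfrac1n)$) and maps $\partial\Delta^{n-1}$ to itself, it commutes with radial projection from the barycenter, so $(r^j(u))' = r^j(u')$. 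Thus $i(r^j(u))$ is the smallest index of a nonzero coordinate of $r^j(u')$. Since $r^j$ sends $\ee_i\mapsto\ee_{\rho^j(i)}$, the coordinate vector of $r^j(u')$ is obtained from that of $u'$ by the permutation $\rho^j$ of coordinate positions, so the nonzero-coordinate index set of $r^j(u')$ is $\rho^j$ applied to that of $u'$. Hence $\lambda(r^j(u)) = \ee_{i(r^j(u))}$, and it remains to check that $\rho^j$ maps the smallest index of a set to the smallest index of its image — i.e.\ that $\min \rho^j(T) = \rho^j(\min T)$ for any nonempty $T\subseteq[n]$. The main (and only mildly delicate) obstacle is this monotonicity fact, which I would dispatch by a short case analysis on the definition of $\rho^j$: it is order-preserving on $\{2,\ldots,j\}$ and on $\{j{+}1,\ldots,n\}$, it sends $1$ to $j$, and the three blocks $\{j\}=\rho^j(1)$, $\{1,\ldots,j{-}1\}=\rho^j(\{2,\ldots,j\})$, $\{j{+}1,\ldots,n\}=\rho^j(\{j{+}1,\ldots,n\})$ are ordered so that $\rho^j(1)=j$ is larger than everything in $\rho^j(\{2,\ldots,j\})$; checking that $\min T=1$ forces $\min\rho^j(T)$ still to be achieved correctly (it then equals $\rho^j(2)=1$ if $2\in T$, else it may jump to $j$) — one sees directly in each case that picking the minimal nonzero coordinate is compatible with applying $\rho^j$, because whenever $1\notin T$ the map is order-preserving on the relevant blocks, and when $1\in T$ we have $\min\rho^j(T)=\min(\{j\}\cup\rho^j(T\setminus\{1\}))$; but we only need the identity that the chosen representative index matches, which follows once one observes $\min T = 1 \iff \min \rho^j(T) = \rho^j(1)$ is \emph{false} in general, so instead I should phrase it as: $i(u)$ is defined as $\min$ over the support, and the claim $i(r^j(u))=\rho^j(i(u))$ is exactly the statement that $\rho^j$ preserves the operation "take the minimum of the support", which I verify by the block structure above. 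Concluding, in both cases $(\lambda\circ r^j)(v)=(r^j\circ\lambda)(v)$, which proves the claim.
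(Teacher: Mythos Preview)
Your overall strategy matches the paper's: split into the two kinds of vertices of $\C_1$ and, for the new vertices, reduce the question to whether $\rho^j$ preserves the operation ``take the minimum of the support''. Your first case is essentially fine (modulo the harmless blurring of the $\widehat 1$-facet with its shrunk copy; the paper records that $a_\alpha$ and $r^j$ commute precisely to justify transporting the niceness of $\Lambda$ to the shrunk facet).

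The genuine gap is in the second case. You correctly set up the reduction to showing $\min\rho^j(T)=\rho^j(\min T)$ for $T$ the support of $u'$, and you correctly note that this fails for general $T\subseteq[n]$ (your own example with $1\in T$ shows it). The proof then stalls: you try to handle both $1\in T$ and $1\notin T$, and the argument dissolves into an unresolved case analysis that never actually establishes the identity.

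What you are missing is the one observation that kills the bad case outright: for any point $u\in C_1$, its first coordinate is the smallest among all coordinates (on the $\widehat 1$-facet it is $0$; on the shrunk $\widehat 1$-facet every other coordinate is at least $(1-\alpha)/n=x_1$; convex combinations preserve this), so the radial projection $u'$ lands on the $\widehat 1$-facet. Hence $1\notin T$ automatically. On $\{2,\ldots,n\}$ the map $\rho^j$ is order-preserving (it is $i\mapsto i-1$ on $\{2,\ldots,j\}$ and the identity on $\{j+1,\ldots,n\}$), so $\min\rho^j(T)=\rho^j(\min T)$ follows immediately. This is exactly what the paper uses when it writes ``$i<i'$ if and only if $\rho^j(i)<\rho^j(i')$'' for the nonzero coordinates of $v$. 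Once you insert this single remark, your argument is complete and coincides with the paper's.
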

\begin{proof}
To ease the reading, we denote by $\Omega$ the barycenter of $\Delta^{n-1}$ (which is denoted $\bb^{[n]}$ elsewhere in the paper).

For $\alpha>0$, the application $a_{\alpha}$ that maps a point $\xx$ of $\Delta^{n-1}$ to a point $\xx'$ such that $\xx'=\Omega+\alpha(\xx-\Omega)$ commutes with $r^j$. This is because $r^j(\Omega)=\Omega$. When we shrink $\T$, we are applying such a map and Lemma~\ref{lem:sym_b} implies then that $(\lambda\circ r^j)(v)=(r^j\circ\lambda)(v)$ for all $v\in V(\C_1)\cap V(\T)$.

Consider now a vertex $v\in V(\C_1)$ that is on the boundary of $\T'$ and an integer $j\in[n]$. The $i$th and $i'$th coordinates of $v$ are nonzero if and only if the $\rho^j(i)$ and $\rho^j(i')$ coordinates of $r^j(v)$ are nonzero. In such a case, we have moreover that $i<i'$ if and only if $\rho^j(i)<\rho^j(i')$. It implies that $(\lambda\circ r^j)(v)=(r^j\circ\lambda)(v)$ for all $v\in V(\C_1)$ that are on the boundary of $\T'$.

Finally, since $a_{\alpha}$ and $r^j$ commutes, this holds for any other vertex in $V(\C_1)$ as well.
\end{proof}

We have 
$$\begin{array}{rcl}
\ds{\sum_{j=1}^n(\det_{\sharp}\circ\lambda_{\sharp})(c_j)} & = & \ds{\sum_{j=1}^n(-1)^{j-1}(\det_{\sharp}\circ\lambda_{\sharp}\circ r^j_{\sharp})(c_1)}\smallskip \\ 
& = & \ds{\sum_{j=1}^n(-1)^{j-1}(\det_{\sharp}\circ\, r^j_{\sharp}\circ\lambda_{\sharp})(c_1)}\smallskip \\ 
& = & \ds{\sum_{j=1}^n(\det_{\sharp}\circ\lambda_{\sharp})(c_1)}\smallskip\\ 
& = & n(\det_{\sharp}\circ\lambda_{\sharp})(c_1),
\end{array}$$
where the first equality is a consequence of the relation $r^j_{\sharp}(c_1)=(-1)^{j-1}c_j$, due to the definition of $r^j$ and to Lemma~\ref{lem:detr}, the second equality comes from the claim above, and the third one is again a consequence of Lemma~\ref{lem:detr}.

Since $\Lambda(v)$ is always a proper subset of $[n]$, the rational number $(\det\circ\lambda)(\sigma)$ can always be written as a fraction of integers, with a denominator being a product of integers smaller than $n$. With Equalities~\eqref{tprime} and~\eqref{decomp}, the fact that $n$ is a prime implies that $(\det_{\sharp}\circ\lambda_{\sharp})(t)$ is nonzero. There is therefore at least one $(n-1)$-dimensional simplex $\tau=[v_1,\ldots,v_n]$ of $\T$ such that $\det(\lambda(v_1),\ldots,\lambda(v_n))\neq 0$. Lemma~\ref{lem:cover} shows then we can pick distinct labels in the $\Lambda(v_i)$'s when $i$ runs over the integer $1,\ldots,n$.
\end{proof}

\begin{proof}[Proof of Proposition~\ref{prop:sperner-symmetry-4}]
The proof is exactly the same as the one of Theorem~\ref{thm:sperner-symmetry-prime}, except the last paragraph. Under the condition of Proposition~\ref{prop:sperner-symmetry-4}, the rational number $(\det\circ\lambda)(\sigma)$ can always be written as a fraction of integers, with a denominator equal to $3!=6$. With Equalities~\eqref{tprime} and~\eqref{decomp}, the fact that $4$ does not divide $6$ implies that $(\det_{\sharp}\circ\lambda_{\sharp})(t)$ is nonzero. The conclusion is then identical.
\end{proof}


\begin{remark}
The proof above for Proposition~\ref{prop:sperner-symmetry-4} can be extended for values of $n$ different from $4$, but to conclude that $(\det_{\sharp}\circ\lambda_{\sharp})(t)$ is nonzero we need that $n$ does not divide $(n-1)!$, which happens precisely when $n$ is a prime number or is equal to $4$. Therefore, it does not seem that we can reach other values for $n$ with the current approach.
\end{remark}

\begin{remark}\label{secretive}
A ``secretive-player'' generalization of the classical envy-free cake division theorem has recently drawn some attention. It was originally proved by Woodall~\cite{woodall1980dividing} and rediscovered with a much simpler proof by Asada et al.~\cite{asada2017fair}, who also gave it its picturesque name. It states that an envy-free division can be achieved in the classical setting without taking into account the preferences of one fixed (``secretive'') player: there is a division such that no matter which piece is chosen by this player, there will be an envy-free assignment of the remaining pieces to the other players.

It is reasonable to assume that Theorem~\ref{main} also admits a ``secretive'' generalization. However, we do not know how to prove this using our approach. The natural adaptation of the proof by Asada et al. to our setting would require more from the simplex $\tau$ found in Theorem~\ref{thm:sperner-symmetry-prime}: denoting its vertices by $v_1,\ldots,v_n$, it would require that the barycenter of $\Delta^{n-1}$ is in the convex hull of the $\lambda(v_i)$ (where $\lambda$ is defined as in the proof of Theorem~\ref{thm:sperner-symmetry-prime}). In our proof we get that the determinant of the matrix whose columns are the $\lambda(v_i)$ is nonzero, but this does not imply this additional required property.
\end{remark}

\section{Proof of the main theorem}


With Theorem~\ref{thm:sperner-symmetry-prime} and Proposition~\ref{prop:sperner-symmetry-4}, the proof of Theorem~\ref{main} is more or less routine. 
We start with a lemma that shows that triangulations satisfying the symmetry condition of Theorem~\ref{thm:sperner-symmetry-prime} and Proposition~\ref{prop:sperner-symmetry-4} exist 
and can have arbitrary small mesh size. This lemma shows moreover that we can label the vertices of the triangulation with the players in a way compatible with the symmetry. A labeling with the players  is called  ``owner-labeling'' by Su~\cite{su1999rental}, and ``ownership-assignment'' by Segal-Halevi~\cite{segal2017fairly}.

\begin{lemma}\label{lem:sym-own}
There exists a nice triangulation $\T$ of $\Delta^{n-1}$ of arbitrary small mesh size and a labeling $o\colon V(\T)\rightarrow[n]$ satisfying $o(r^j(v))=o(v)$ for every vertex $v$ of $\T$ in the $\widehat{1}$-facet and such that adjacent vertices in $\T$ get distinct labels via $o$.

Moreover, such a triangulation can be built so that the supporting faces of any two adjacent vertices are comparable by inclusion. 
\end{lemma}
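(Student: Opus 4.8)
The plan is to let $\T$ be an iterated barycentric subdivision of $\Delta^{n-1}$ and to take for $o$ the ``dimension'' labeling. Fix an integer $N\geq 1$, set $K=\sd^{N-1}(\Delta^{n-1})$ (with $\sd^0$ the identity) and $\T=\sd(K)=\sd^N(\Delta^{n-1})$. Recall that the vertices of $\T$ are in bijection with the (nonempty) simplices of $K$, say $v\leftrightarrow A_v$; that a set of vertices of $\T$ spans a simplex exactly when the corresponding simplices of $K$ form a chain for inclusion; and that, as a point of $\Delta^{n-1}$, $v$ is the barycenter of $A_v$, hence lies in the relative interior of $A_v$. Define $o(v)=1+\dim A_v$; since $0\leq\dim A_v\leq\dim K=n-1$, this is an element of $[n]$.

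First, $\T$ is nice. Each $r^j$ restricts to a simplicial automorphism of $\Delta^{n-1}$ (it permutes the vertices $\ee_i$ according to $\rho^j$), hence induces a simplicial automorphism of $\sd^N(\Delta^{n-1})$ which on $\Delta^{n-1}$ coincides with $r^j$. Barycentric subdivision commutes with restriction to a subcomplex, so the triangulation induced by $\T$ on the $\widehat{1}$-facet is the $N$-th barycentric subdivision of that facet; applying $r^j$ carries it onto the $N$-th barycentric subdivision of the $\widehat{j}$-facet, which is again a subcomplex of $\T$. Hence $r^j(\sigma)\in\T$ for every $\sigma\in\T$ contained in the $\widehat{1}$-facet, which is niceness.

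Next, $o$ is a valid owner-labeling, it has the required symmetry, and supporting faces of adjacent vertices are comparable. If $u,v$ are adjacent in $\T$, then, after possibly exchanging them, $A_u\subsetneq A_v$; consequently $\dim A_u<\dim A_v$, so $o(u)\neq o(v)$. Moreover $\supp(u)$ is the smallest face of $\Delta^{n-1}$ containing $A_u$ (since $u$ lies in the relative interior of $A_u$), and likewise $\supp(v)$ is the smallest face containing $A_v$; from $A_u\subseteq A_v$ we get $\supp(u)\subseteq\supp(v)$, so these supporting faces are comparable (this also covers the case where one of the two vertices is interior to $\Delta^{n-1}$, in which case its supporting face is $\Delta^{n-1}$ itself). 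Finally, the automorphism of $K$ induced by $r^j$ maps each simplex to a simplex of the same dimension, so $\dim A_{r^j(v)}=\dim A_v$ and therefore $o(r^j(v))=o(v)$ for \emph{every} vertex $v$ of $\T$, in particular for the vertices lying in the $\widehat{1}$-facet.

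To finish, choose $N$ large: each barycentric subdivision of an $(n-1)$-dimensional simplicial complex shrinks the maximal edge length by a factor at most $\tfrac{n-1}{n}<1$, so the mesh of $\sd^N(\Delta^{n-1})$ tends to $0$ as $N\to\infty$. The only delicate point is that properness of $o$ and its $r^j$-invariance, which would be in tension for an arbitrary nice triangulation and an arbitrary proper coloring, are automatically compatible here: the dimension function on $K$ is simultaneously a proper coloring of $\sd(K)$ (adjacent vertices correspond to strictly nested simplices) and invariant under the dimension-preserving automorphisms $r^j$. This is precisely why the iterated barycentric subdivision is the natural choice, and I expect this compatibility observation to be the conceptual crux, the rest being routine facts about barycentric subdivisions.
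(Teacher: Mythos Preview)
Your proof is correct and follows essentially the same approach as the paper: take $\T=\sd^N(\Delta^{n-1})$ and label each vertex by one plus the dimension of the corresponding simplex of $\sd^{N-1}(\Delta^{n-1})$. You simply fill in the verifications (niceness via the simplicial automorphisms $r^j$, properness and $r^j$-invariance of $o$, and comparability of supporting faces) that the paper leaves as ``clearly'' or ``as required.''
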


\begin{proof}
We repeat the barycentric subdivision operation starting with $\Delta^{n-1}$ as many times as needed to get a triangulation $\T$ with a sufficiently small mesh size. This triangulation is clearly nice. 

The triangulation $\T$ is thus of the form $\sd^N\left(\Delta^{n-1}\right)$ for some positive integer $N$. Each vertex $v$ of $\T$ corresponds to a simplex of $\sd^{N-1}\left(\Delta^{n-1}\right)$. Defining $o(v)$ to be the dimension of this simplex plus one shows that this labeling $o$ is as required.
\end{proof}

\begin{proof}[Proof of Theorem~\ref{main}]
Consider a nice triangulation $\T$ with a labeling $o$ as in Lemma~\ref{lem:sym-own}. For a point $\xx$ in $\Delta^{n-1}$, we denote by $\D(\xx)$ the division of the cake obtained when the cake is cut at positions $X_1,\ldots,X_n$, where $X_{\ell}=\sum_{i=1}^{\ell}x_i$. (Remember that because of our metrics, whether the endpoints belong or not to the pieces does not matter.)

We are going to define a labeling $\Lambda$ of the vertices of $\T$ to which we are going to apply Theorem~\ref{thm:sperner-symmetry-prime} when $n$ is a prime number and Proposition~\ref{prop:sperner-symmetry-4} when $n=4$. This labeling will be defined with the help of a set-valued map $L_i\colon\Delta^{n-1}\rightarrow 2^{[n]}\setminus\{\varnothing\}$ we introduce now.

Consider a point $\xx$ in $\Delta^{n-1}$. For each nonempty piece $I$ in $p_{i}\left(\D(\xx)\right)$, we put in $L_i(\xx)$ the smallest index $j$ such that $X_j$ is the right endpoint of $I$. If $\varnothing\in p_{i}\left(\D(\xx)\right)$, then we add to $L_i(\xx)$ the set $J_{\xx}$. Because of the full division assumption, $L_i(\xx)\neq\varnothing$.

We define then 
$$\Lambda_i(\xx)=\left\{\begin{array}{ll} J_{\xx} & \mbox{if $L_i(\xx)=J_{\xx}$} \\ \{\min\left(L_i(\xx)\setminus J_{\xx}\right)\} & \mbox{otherwise.}\end{array}\right.$$
Note that if $L_i(\xx)\neq J_{\xx}$, then $L_i(\xx)\setminus J_{\xx}\neq\varnothing$, which ensures that $\Lambda_i(\xx)$ is always either $J_{\xx}$ or a singleton made of an element not in $J_{\xx}$.


\begin{claim}
Given $\xx$ in the $\widehat{1}$-facet of $\Delta^{n-1}$, the equality $\Lambda_i(r^j(\xx))=\rho^j\left(\Lambda_i(\xx)\right)$ holds for all $i$ and $j$.
\end{claim}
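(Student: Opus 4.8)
The plan is to verify the equality $\Lambda_i(r^j(\xx))=\rho^j(\Lambda_i(\xx))$ directly from the definitions, tracking carefully how cutting points, pieces, and indices transform under $r^j$. First I would unwind what $r^j$ does to a point $\xx$ in the $\widehat{1}$-facet: since $\xx$ has $x_1=0$, the point $\xx'=r^j(\xx)$ has coordinates $x'_{\rho^j(i)}=x_i$, i.e. $x'_1=x_2,\ x'_2=x_3,\ \dots,\ x'_{j-1}=x_j,\ x'_j=x_1=0$, and $x'_i=x_i$ for $i\geq j+1$. Then I would compare the cut positions $X_\ell=\sum_{i=1}^\ell x_i$ with $X'_\ell=\sum_{i=1}^\ell x'_i$. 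The key observation is that $\{X_1,\dots,X_n\}$ and $\{X'_1,\dots,X'_n\}$ give exactly the same multiset of cut points (only the index-$1$ and index-$j$ slots are relabeled, with $X_1=0$ and $X'_j=0$ playing interchangeable roles as "degenerate" cuts at $0$), so $\D(r^j(\xx))=\D(\xx)$ as divisions of the cake. Consequently $p_i(\D(r^j(\xx)))=p_i(\D(\xx))$, and the question reduces to bookkeeping about which \emph{index} gets recorded in $L_i$ for each piece.

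Next I would make precise the correspondence between pieces and recorded indices. For a nonempty piece $I$, $L_i$ records the smallest index $\ell$ with $X_\ell$ equal to the right endpoint of $I$; I would show that if $\ell$ is this smallest index for the $\xx$-labeling, then $\rho^j(\ell)$ is the smallest index for the $r^j(\xx)$-labeling. This follows because $\rho^j$ is an order-preserving bijection from $\{1,\dots,j\}\setminus\{1\}\cup\{j\}$... more cleanly: among the indices $\ell$ with $X_\ell$ equal to a given value, $\rho^j$ maps them to exactly the indices $\ell'$ with $X'_{\ell'}$ equal to that value, and $\rho^j$ restricted to $\{1,\dots,j\}$ sends $1\mapsto j$ and is order-preserving on $\{2,\dots,j\}$ shifting down by one, while fixing $\{j+1,\dots,n\}$; the only subtlety is whether the \emph{minimum} is preserved, which needs the fact that $x_1=0$ forces $X_0$-type ambiguities to sit precisely at the slots swapped by $\rho^j$. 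I would also check the empty-piece case: $\varnothing\in p_i$ contributes $J_{\xx}$ to $L_i(\xx)$, and Lemma~\ref{lem:sym_b}-style reasoning (or a direct check) gives $J_{r^j(\xx)}=\rho^j(J_{\xx})$ since $j\in J_{\xx}$ iff $x_j=0$ iff $x'_{\rho^j(j)}=0$.

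Having established $L_i(r^j(\xx))=\rho^j(L_i(\xx))$, the passage to $\Lambda_i$ is then formal: $\rho^j$ is a bijection, so $L_i(r^j(\xx))=J_{r^j(\xx)}$ iff $\rho^j(L_i(\xx))=\rho^j(J_{\xx})$ iff $L_i(\xx)=J_{\xx}$, handling the first branch; and in the other branch, since $\rho^j$ restricted to $\{1,\dots,j\}$ sends $1\mapsto j$ and otherwise decrements, while fixing the rest, I need that $\rho^j$ is order-preserving on each of the relevant blocks so that $\min(\rho^j(S))=\rho^j(\min S)$ whenever $S\cap\{1\}=\varnothing$ — but here $S=L_i(\xx)\setminus J_{\xx}$, and since we are on the $\widehat1$-facet, $1\in J_{\xx}$, so $1\notin S$, which is exactly what makes $\rho^j$ order-preserving on $S$. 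Thus $\Lambda_i(r^j(\xx))=\{\min(\rho^j(L_i(\xx))\setminus\rho^j(J_{\xx}))\}=\{\rho^j(\min(L_i(\xx)\setminus J_{\xx}))\}=\rho^j(\Lambda_i(\xx))$.

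The main obstacle I anticipate is the index bookkeeping in the claim $L_i(r^j(\xx))=\rho^j(L_i(\xx))$, specifically the interaction between "take the \emph{smallest} index" and the permutation $\rho^j$, which is not globally order-preserving (it moves $1$ to $j$). The saving grace, which I would state explicitly, is that on the $\widehat1$-facet $x_1=0$ means the slot $\ell=1$ is never the strict first occurrence of a right endpoint of a nonempty piece in any way that conflicts — equivalently, $1\in J_{\xx}$ always, so $1$ never appears in the "singleton" part $L_i\setminus J_{\xx}$, and $\rho^j$ \emph{is} order-preserving on the complement of $\{1\}$. I would be careful to phrase the argument so that both the empty-piece contribution and the nonempty-piece contributions to $L_i$ are handled uniformly, and to note that the degenerate cut points at $0$ (coming from vanishing coordinates) are precisely the ones permuted among themselves by the passage from $\xx$ to $r^j(\xx)$, so they do not spuriously become "smallest indices" of any genuine piece.
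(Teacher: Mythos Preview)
Your proposal is correct and follows essentially the same route as the paper: both arguments first observe that $\D(r^j(\xx))=\D(\xx)$, then track how the index recorded for each preferred piece transforms under $\rho^j$, using that $J_{r^j(\xx)}=\rho^j(J_{\xx})$ for the empty-piece case. Your version is more explicit than the paper's---you pass through the intermediate equality $L_i(r^j(\xx))=\rho^j(L_i(\xx))$ and spell out why $\rho^j$ preserves minima on the relevant set (namely, $\rho^j$ is order-preserving on $[n]\setminus\{1\}$ and $1\in J_{\xx}$ ensures $1\notin L_i(\xx)\setminus J_{\xx}$)---whereas the paper compresses this into the single sentence ``if $\ell$ is the smallest index such that $X_\ell$ is the right endpoint of a nonempty piece, then $\rho^j(\ell)$ is the smallest index $\ell'$ such that $Y_{\ell'}$ is the right endpoint of that same nonempty piece.''
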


\begin{proof}
Let $\yy=r^j(\xx)$ and $Y_{\ell}=\sum_{i=1}^{\ell}y_i$. Note that $\D(\xx)=\D(\yy)$. 

Suppose first that there is at least one nonempty piece in $p_i(\D(\xx))$.
If $\ell$ is the smallest index such that $X_{\ell}$ is the right endpoint of a nonempty piece, then $\rho^j(\ell)$ is the smallest index $\ell'$ such that $Y_{\ell'}$ is the right endpoint of that same nonempty piece. This shows that in this case $\Lambda_i(\yy)=\rho^j\left(\Lambda_i(\xx)\right)$. 

Suppose now that $p_i(\D(\xx))=\{\varnothing\}$. We have $\Lambda_i(\xx)=J_{\xx}$ and $\Lambda_i(\yy)=J_{\yy}$. Since $J_{\yy}=\rho^j(J_{\xx})$ by definition of $\rho^j$, we have again $\Lambda_i(\yy)=\rho^j\left(\Lambda_i(\xx)\right)$.
\end{proof}

%
%

For a vertex $v\in V(\T)$ of coordinate $\xx$, we define $\Lambda(v)$ to be $\Lambda_{o(v)}(\xx)$. 
If $v$ is in the $\widehat{1}$-facet, we have
$$\Lambda(r^j(v))=\Lambda_{o(r^j(v))}(r^j(\xx))=\Lambda_{o(v)}(r^j(\xx))=\rho^j\left(\Lambda_{o(v)}(\xx)\right)=\rho^j\left(\Lambda(v)\right),$$
where $\xx$ is the coordinate vector of $v$. The first equality is by definition, the second is by Lemma~\ref{lem:sym-own}, the third is the preceding claim, and the last is again by definition.
 Thus, $\Lambda$ is a nice labeling. Moreover, it satisfies the additional condition of Proposition~\ref{prop:sperner-symmetry-4}. Theorem~\ref{thm:sperner-symmetry-prime} and Proposition~\ref{prop:sperner-symmetry-4} can be applied and their conclusion holds: there is an $(n-1)$-dimensional $\tau$ of $\T$ such that it is possible to pick a distinct label in each $\Lambda(u)$ when $u$ runs over the vertices of $\tau$.

Lemma~\ref{lem:sym-own} allows to choose $\T$ of arbitrarily small mesh size. Compactness and the following claim imply that there is a point $\xx^*=(x_1^*,\ldots,x_n^*)$ of $\Delta^{n-1}$ such that it is possible to select a distinct element from each $L_i(\xx^*)$ when $i$ goes from $1$ to $n$. 

\begin{claim}
Let $(\xx^k)$ be a sequence of points in $\Delta^{n-1}$ converging to a point $\xx^{\infty}$. If $j\in\Lambda_i(\xx^k)$ for all $k$, then $j\in L_i(\xx^{\infty})$.
\end{claim}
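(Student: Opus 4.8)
The plan is to unwind the definitions of $\Lambda_i$ and $L_i$ and invoke the closed preferences assumption. Fix the sequence $(\xx^k)$ with $\xx^k\to\xx^\infty$ and suppose $j\in\Lambda_i(\xx^k)$ for all $k$. By passing to a subsequence (which changes nothing about the hypothesis or the conclusion), I may assume that the combinatorial type of the divisions $\D(\xx^k)$ is constant along the sequence: the set of indices $\ell$ for which $x^k_\ell=0$ does not depend on $k$, so in particular $J_{\xx^k}$ is a fixed set $J$ for all $k$, and hence $\D(\xx^k)$ has a fixed number of nonempty pieces. Note $J\subseteq J_{\xx^\infty}$, since a coordinate that is zero for all $k$ is zero in the limit (the reverse inclusion may fail). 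There are now two cases according to whether $j\in J$ or $j\notin J$.

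First suppose $j\notin J$. Then for each $k$, since $j\in\Lambda_i(\xx^k)$ and $j\notin J_{\xx^k}=J$, we must be in the ``otherwise'' branch of the definition of $\Lambda_i$, so $j=\min(L_i(\xx^k)\setminus J)$; in particular $j\in L_i(\xx^k)$. By the definition of $L_i$, this means there is a nonempty piece $P^k\in p_i(\D(\xx^k))$ whose right endpoint is $X^k_j=\sum_{\ell=1}^j x^k_\ell$ and such that $j$ is the smallest index with that property, i.e.\ $X^k_{j-1}<X^k_j$, and $P^k=[X^k_{j-1},X^k_j]$ (endpoints being irrelevant for our metric). As $k\to\infty$, the sequence $(P^k)$ converges in the symmetric-difference metric to $P^\infty:=[X^\infty_{j-1},X^\infty_j]$, where $X^\infty_\ell=\sum_{\ell'=1}^\ell x^\infty_{\ell'}$. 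Two subcases: if $X^\infty_{j-1}<X^\infty_j$, then $P^\infty$ is a genuine nonempty piece of $\D(\xx^\infty)$, the closed preferences assumption gives $P^\infty\in p_i(\D(\xx^\infty))$, and since $j\notin J_{\xx^\infty}$ would force $x^\infty_j>0$ hence $X^\infty_{j-1}<X^\infty_j$ --- wait, one must check the smallest index: $j$ is the smallest index with right endpoint $X^\infty_j$ precisely because $x^\infty_j>0$, which holds as $j\notin J_{\xx^\infty}$ in this subcase; so $j\in L_i(\xx^\infty)$. If instead $X^\infty_{j-1}=X^\infty_j$, then $x^\infty_j=0$, so $j\in J_{\xx^\infty}$, and $P^\infty=\varnothing$; the closed preferences assumption then yields $\varnothing\in p_i(\D(\xx^\infty))$, so by the definition of $L_i$ we add $J_{\xx^\infty}$ to $L_i(\xx^\infty)$, and since $j\in J_{\xx^\infty}$ we again conclude $j\in L_i(\xx^\infty)$.

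Now suppose $j\in J$, so $x^k_j=0$ for all $k$ and thus $x^\infty_j=0$, i.e.\ $j\in J_{\xx^\infty}$. Since $j\in\Lambda_i(\xx^k)$ and $j\in J_{\xx^k}=J$, the ``otherwise'' branch is impossible (its output is a singleton disjoint from $J_{\xx^k}$), so we are in the first branch: $\Lambda_i(\xx^k)=J_{\xx^k}=J$ and $L_i(\xx^k)=J$. By the definition of $L_i$, the only way $L_i(\xx^k)$ can equal $J=J_{\xx^k}$ is that $\varnothing\in p_i(\D(\xx^k))$ and no nonempty piece of $p_i(\D(\xx^k))$ contributes an index outside $J$; at any rate $\varnothing\in p_i(\D(\xx^k))$ for all $k$. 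Taking $P^k=\varnothing$ for every $k$, the constant sequence converges to $\varnothing\in p_i(\D(\xx^\infty))$ by the closed preferences assumption, so $J_{\xx^\infty}$ is added to $L_i(\xx^\infty)$, and since $j\in J_{\xx^\infty}$ we get $j\in L_i(\xx^\infty)$, as desired.

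The only subtle point --- the step I expect to be the real content --- is matching up the combinatorial bookkeeping of "smallest index whose prefix sum is the right endpoint" across the limit: when a coordinate degenerates to $0$ in the limit, a nonempty piece can collapse to $\varnothing$, and one must check that in that event the index $j$ still lands in $L_i(\xx^\infty)$ via the $\varnothing\in p_i$ clause rather than being lost. The case analysis above, together with the observation $J_{\xx^k}=J\subseteq J_{\xx^\infty}$ after passing to a subsequence, handles exactly this; everything else is a direct application of the closed preferences assumption to a suitable convergent sequence of pieces $(P^k)$.
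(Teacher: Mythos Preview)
Your proof is correct and follows essentially the same approach as the paper: pass to a subsequence so that $J_{\xx^k}=J$ is constant, note $J\subseteq J_{\xx^\infty}$, and then split into the cases $j\notin J_{\xx^\infty}$, $j\in J_{\xx^\infty}\setminus J$, and $j\in J$ (you organize the first two as subcases of $j\notin J$, but the logic is identical), applying the closed preferences assumption to the sequence of pieces $(X_{j-1}^k,X_j^k)$ or to the constant sequence $\varnothing$ as appropriate.
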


\begin{proof}

We assume without loss of generality that all $J_{\xx^k}$ are equal (for finite $k$) and denote by $J$ this subset. We have $J_{\xx^{\infty}}\supseteq J$. Consider a $j$ that is in all $\Lambda_i(\xx^k)$.

Suppose first that $j\notin J_{\xx^{\infty}}$. The intervals $(X_{j-1}^k,X_j^k)$ and $(X_{j-1}^{\infty},X_j^{\infty})$ are all of positive length. The interval $(X_{j-1}^k,X_j^k)$ belongs thus to $p_i(\D(\xx^k))$ for all $k$ and since $$\lim_{k\rightarrow+\infty}\delta\left((X_{j-1}^k,X_j^k),(X_{j-1}^{\infty},X_j^{\infty})\right)=0,$$ the interval $(X_{j-1}^{\infty},X_j^{\infty})$ belongs to $p_i(\D(\xx^{\infty}))$ (closed preferences assumption). The interval $(X_{j-1}^{\infty},X_j^{\infty})$ being of positive length, we get $j\in L_i(\xx^{\infty})$.

Suppose now that $j\in J_{\xx^{\infty}}\setminus J$. The intervals $(X_{j-1}^k,X_j^k)$ are all of positive length. The interval $(X_{j-1}^k,X_j^k)$ belongs thus to $p_i(\D(\xx^k))$ for all $k$. The fact that $j\in J_{\xx^{\infty}}$ means that $X_{j-1}^{\infty}=X_j^{\infty}$, which implies that 
$$\lim_{k\rightarrow+\infty}\delta\left((X_{j-1}^k,X_j^k),\varnothing\right)=0.$$ The closed preferences assumption implies then that $\varnothing\in p_i(\D(\xx^{\infty}))$. In such a case, by definition of $L_i$, we have $j\in L_i(\xx^{\infty})$.

Suppose finally that $j\in J$. By definition of $\Lambda_i$, it means that $\varnothing\in p_i(\D(\xx^k))$ for all $k$. Since $\delta(\varnothing,\varnothing)=0$, we get that $\varnothing\in p_i(\D(\xx^{\infty})$ and thus $j\in L_i(\xx^{\infty})$.
\end{proof}

We finish the proof by showing that $\D(\xx^*)$ is an envy-free division.  Denote by $j_i$ pairwise distinct elements selected in the $L_i(\xx^*)$ when $i$ goes from $1$ to $n$. If $(X_{j_i-1}^*,X_{j_i}^*)\in p_i(\D(\xx^*))$, define $\pi(i)=(X_{j_i-1}^*,X_{j_i}^*)$. Otherwise, define $\pi(i)=\varnothing$. By definition of $L_i$, we have $\pi(i)\in p_i(\D(\xx^*))$ (item~\ref{i} is satisfied). Since $\{j_i\colon i\in[n]\}=[n]$, each nonempty piece is equal to some $\pi(i)$ (item~\ref{ii} is satisfied). Finally, if $\pi(i)=\pi(i')$, we have $\pi(i)=\varnothing$ because otherwise $j_i$ would be equal to $j_{i'}$ (item~\ref{iii} is satisfied). 
\end{proof}

\subsection*{Acknowledgments} 
The authors are grateful to the referee for his thorough reading and his suggestions and questions that helped improve the paper. 

This work has been initiated when the authors were in residence at the Mathematical Sciences Research Institute in Berkeley, California, during the Fall 2017 semester. This material is  thus partially based upon work supported by the National Science Foundation under Grant No. DMS-1440140.

\bibliographystyle{plain}
\bibliography{cake}

\end{document}